\documentclass[11pt]{amsart}
\usepackage{amsthm, amssymb, latexsym, amsmath, mathtools, color}
\usepackage[a4paper, total={6.5in, 8in}]{geometry}
\usepackage[dvipsnames,svgnames,table]{xcolor}
\usepackage[colorlinks=true,linkcolor=RoyalBlue,urlcolor=RoyalBlue,citecolor=PineGreen]{hyperref}
\usepackage{tikz}
\usepackage{tikz-cd} 
\usetikzlibrary{arrows.meta}
\usepackage{comment, caption}
\usepackage{graphicx,subcaption}

\usepackage{pgf,tikz,pgfplots}
\pgfplotsset{compat=1.14}
\usepackage{pgf,tikz,pgfplots}
\usepackage{mathrsfs}
\usetikzlibrary{arrows}
\usepackage{mathrsfs}
\usetikzlibrary{arrows}
\usepackage{capt-of}
\usetikzlibrary{decorations.pathreplacing}
\usetikzlibrary{cd}
\usetikzlibrary{positioning}
\usetikzlibrary{calc}
\usepackage{algorithmic}
\usepgfplotslibrary{fillbetween}
\usetikzlibrary{intersections}
\usetikzlibrary{patterns}

\usepackage[nameinlink]{cleveref}

\title{A general-position problem for planar line arrangements}

\author{Oliver Roche-Newton}
\address{Oliver Roche-Newton \\ Institute for Algebra, Johannes Kepler University Linz, Linz, Austria.}
\email{o.rochenewton@gmail.com}

\newcommand{\cA}{\mathcal{A}}

\newcommand{\cK}{\mathcal{K}}
\newcommand{\cL}{\mathcal{L}}
\newcommand{\cH}{\mathcal{H}}
\newcommand{\cF}{\mathcal{F}}
\newcommand{\cG}{\mathcal{G}}
\newcommand{\cJ}{\mathcal{J}}
\newcommand{\cE}{\mathcal{E}}

\newcommand{\cC}{\mathcal{C}}

\newtheorem{lemma}{Lemma}

\newtheorem{theorem}{Theorem}
\newtheorem{corollary}{Corollary}

\theoremstyle{remark}

\begin{document}

\begin{abstract}
For all $\delta>0$ and infinitely many $n \in \mathbb N$, we show that there exists a set $L$ of $n$ lines in $\mathbb R^2$ such that there are no intersecting quadruples, but for every subset $L' \subset L$ such that $|L'| \geq n^{\frac{4}{5}+\delta}$, there exist three lines from $L'$ with a common point of intersection. This gives an improved bound for a dual form of a theorem of Balogh and Solymosi \cite{BaSo}. As a consequence, we derive an improved lower bound for the Hadwiger-Debrunner number $HD_2(p,3)$.

We also give, for all $0 \leq s \leq 1$ and arbitrarily large $n \in \mathbb N$, a construction of a point set $S \subset [n]^3$ with cardinality $|S|\geq n^{3-s}$, such that $S$ contains $O(n^{6-4s})$ collinear triples. This shows that a supersaturation lemma of Balogh and Solymosi \cite{BaSo} is optimal, up to logarithmic factors.
\end{abstract}

\maketitle

\section{Introduction}

Let $P \subset \mathbb R^2$ be a finite point set with no four points on a line. Is it always true that $P$ contains a large subset with no three points on a line?  In  \cite{BaSo}, Balogh and Solymosi considered this problem and gave a partial negative answer. 

\begin{theorem} \label{thm:BS}
Let $\delta >0$. For infinitely many $n \in \mathbb N$, there exists a set $P \subset \mathbb R^2$ such that $|P|=n$, no four points of $P$ lie on a line, and every subset $P'$ with $|P'| \geq n^{\frac{5}{6}+ \delta}$ contains a collinear triple.
\end{theorem}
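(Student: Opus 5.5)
We follow the Balogh--Solymosi strategy: build a grid-like configuration in a higher-dimensional integer box, control independent sets with the hypergraph container method, and then project generically to the plane.

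\textbf{Step 1: the 3-uniform hypergraph.} Take $S \subset [N]^d$ in general position in the sense that no four of its points are collinear, and consider the 3-uniform hypergraph $\mathcal H$ on $S$ whose edges are the collinear triples. A generic projection $\pi:\mathbb R^d\to\mathbb R^2$ is injective on $S$. It sends collinear triples to collinear triples, and it creates no new collinear quadruples, and more generally no new collinear triples beyond those forced. To see this, note that three or four points that are not collinear span an affine subspace of dimension at least $2$. For generic $\pi$, collinearity of their images would then be a nontrivial algebraic condition, so it fails. Hence $P=\pi(S)$ has no four points on a line, and a subset of $P$ with no collinear triple corresponds exactly to an independent set of $\mathcal H$. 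The theorem therefore reduces to finding $S$ with no four points on a line whose largest independent set has size at most $|S|^{5/6+\delta}$.

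\textbf{Step 2: a dense set in $[N]^3$ with no four on a line.} Work in $d=3$. Choose $S$ as a random subset of $[N]^3$ of density $p$, and delete one point from each collinear quadruple; a suitable choice of $p$ will make these deletions negligible. Alternatively, take $S$ as a large subset of the points on the moment curve, or on a quadric, modulo a prime. The supersaturation input is that every subset $A\subset[N]^3$ with $|A|\ge N^{3-s}$ contains at least roughly $N^{6-4s}$ collinear triples, up to logarithmic factors. I would prove this by counting pairs in $A$ and their midpoints, using Cauchy--Schwarz on the number of pairs with a given midpoint.

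\textbf{Step 3: containers and the union bound.} Apply the hypergraph container lemma, together with this supersaturation, to the collinear-triple hypergraph on $[N]^3$. This yields a family of $\exp(\tilde O(N^{\alpha}))$ containers, each of size at most $N^{3-s}$, covering all triple-free subsets. For the random set $S$ of density $p$, each container meets $S$ in about $pN^{3-s}$ points, and a Chernoff bound beats the number of containers once $pN^{3-s}\gg N^{\alpha}$. We then optimize $p$ and $s$ subject to three constraints. First, the expected number of collinear quadruples, about $p^4N^{3}\cdot N^{3}\log N$, must be at most $pN^3/2$, so that deleting quadruples leaves most of $S$. Second, $|S|=n\approx pN^3$. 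Third, the largest independent set should come out as $n^{5/6+\delta}$. Executing this computation with the codegree conditions of the container lemma is where the exponent $5/6$ should come out. I expect this to be the main obstacle: the codegree bounds for collinear triples in $[N]^3$ require counting lines through pairs of points with many grid points, which involves gcd sums, and the exponent bookkeeping must land exactly on $5/6$.

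Step 4 is then to project generically to the plane as in Step 1.
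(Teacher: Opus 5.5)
The paper does not prove this theorem; it quotes it from \cite{BaSo}. Your outline is the Balogh--Solymosi framework that the paper describes and then adapts to lines in Sections 2--4. That framework is: a generic projection from $\mathbb R^3$ (your Step 1 is correct); a $p$-random subset of $[N]^3$ with $p\asymp N^{-1}\log^{-1/3}N$, so that the roughly $p^4N^6\log N$ collinear quadruples can be deleted, leaving $n\asymp N^2$ up to logarithms; and then supersaturation, containers and a union bound.

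There are two genuine problems. First, your proposed proof of the supersaturation lemma (Theorem~\ref{thm:ssl}) fails. Grouping pairs of $A$ by midpoint and applying Cauchy--Schwarz counts solutions of $a+b=c+d$, i.e.\ parallelograms, not collinear triples. A pair gives a collinear triple only if its midpoint lies in $A$, and nothing forces that: a $3$-AP-free subset of $[N]^3$ of size $N^{3-o(1)}$ (from Behrend's construction) has no such pairs, yet the lemma still applies to it. What works is the convexity argument of Lemma~\ref{lem:sslL}, run direction by direction:
\begin{itemize}
\item Fix a primitive $v$ with $\|v\|_\infty\le k$. The lines of direction $v$ partition $[N]^3$ into $O(kN^2)$ lines.
\item By Jensen, $A$ has $\gg |A|^3/(kN^2)^2$ collinear triples in direction $v$ once $|A|\ge CkN^2$.
\item Sum over the $\asymp k^3$ primitive directions with $\|v\|_\infty\in[k/2,k]$ and take $k\asymp N^{1-s}$; this gives $\gg N^{6-4s}$.
\end{itemize}

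Second, the step that actually produces $5/6$ is left undone, and the obstacle you name is not the real one. Codegrees need no gcd sums: two points determine a line meeting $[N]^3$ in at most $N$ points, so $\Delta_2\le N$. Gcd-type sums enter only in the quadruple count, which you already took for granted. The bookkeeping runs as follows.
\begin{itemize}
\item For a container $C$ with $|C|=N^{3-s}$, supersaturation gives average degree $d\gg N^{3-3s}/\log N$.
\item Theorem~\ref{thm:container} then applies with $\epsilon=N^{-\delta}$ and $\tau\approx N^{\delta}\max(N/d,\,d^{-1/2})$. This yields $\exp\bigl(\tilde O(N^{\delta}\max(N^{1+2s},N^{(3+s)/2}))\bigr)$ children.
\item Iterate $O(1/\delta)$ times, as in Lemma~\ref{lem:cont2}, until every container has $|C|\le N^{8/3}$. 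Throughout, $s\le 1/3$, so both terms are at most $N^{5/3}$ and $\log|\mathcal C|=\tilde O(N^{5/3+\delta})$.
\item For the final containers, $p|C|\lesssim N^{5/3}$ as well. This balance at $s=1/3$ is exactly where $5/6$ comes from.
\item The union bound $\sum_C\binom{|C|}{m}p^m\le|\mathcal C|\,(ep|C|/m)^m$ with $m=N^{5/3+2\delta}$ tends to $0$. Hence $\alpha_3\le N^{5/3+2\delta}=n^{5/6+O(\delta)}$.
\end{itemize}

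Finally, drop the ``alternative'' of taking points on a moment curve or quadric modulo a prime. Such sets are built to have few or no collinear triples (the set $A$ of Section~\ref{sec:optimal} has none), which is the opposite of what you need here.
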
 

On the other hand, Füredi \cite{Fu} proved that such a set $P$ must contain a collinear-triple-free subset $P' \subset P$ such that $|P'| =\Omega(\sqrt {n \log n})$, slightly improving the elementary bound $|P'| =\Omega( \sqrt {|P|})$ which can be obtained via a greedy algorithm, or from a simple probabilistic deletion argument. The question of determining the correct exponent for this problem\footnote{See also T. F. Bloom, Erdős Problem \#589, https://www.erdosproblems.com/589, accessed 2026-07-13.} remains wide open.

We consider a dual form of this problem in this paper. Given a family of sets $\cF$ and a positive integer $k$, we say that $\mathcal F$ is $k$-independent if for every subset $\{F_1,\dots,F_k\} \subseteq \cF$ with cardinality $k$ we have $F_1\cap  \dots \cap F_k= \emptyset$.

After applying point-line duality, Theorem \ref{thm:BS} shows that there exists a family $L$ of $n$ lines in $\mathbb R^2$ such that $L$ is $4$-independent, but every subset $L' \subseteq L $ with $|L'| \geq n^{\frac{5}{6}+\delta} $ is not $3$-independent. The main result of this paper is the following, which improves the exponent $5/6$ to $4/5$ for this form of the problem.

\begin{theorem} \label{thm:BSlines}
    For all $\delta>0$ and for infinitely many $n \in \mathbb N$, there exists a set $L$ of $n$ lines in $\mathbb R^2$ such that $L$ is $4$-independent, but for every subset $L' \subseteq L$ such that $|L'| \geq n^{\frac{4}{5}+\delta}$, there exist three elements of $L'$ with a common point of intersection.
\end{theorem}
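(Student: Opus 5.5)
The plan follows the Balogh--Solymosi strategy, carried out on the dual side: a supersaturated grid-like structure, a random projection to the plane, and the hypergraph container method.

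\emph{Construction.} Start with a family $\cL_0$ of lines in $\mathbb R^2$ coming from an algebraic structure with many concurrent triples. Concretely, take the lines $y = ax+b$ with $(a,b)\in [N]\times[N^2]$, so that $|\cL_0| \approx N^3$. Three such lines meet in a common point exactly when their parameters satisfy a linear condition, so the concurrent triples form a dense $3$-uniform hypergraph $H$. The key property needed is \emph{supersaturation}: every subfamily of $\cL_0$ of size $m\ge N^{3-s}$ contains $\Omega(N^{6-4s})$ concurrent triples, up to logarithms. This is the dual analogue of the Balogh--Solymosi lemma for collinear triples in $[n]^3$.

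Concurrent quadruples must then be destroyed. I would pass to a random subfamily $L$, keeping each line independently with probability $p$, and delete one line from each surviving concurrent quadruple. With $p$ tuned so that the expected number of quadruples is at most half the expected number of lines, this leaves $|L| = n \approx pN^3$, and $L$ is $4$-independent.

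\emph{Containers.} Apply the hypergraph container lemma to $H$, using the supersaturation estimate to control the codegrees. Every $3$-independent subfamily (an independent set of $H$) lies in one of at most $\exp(\tilde O(N^{\alpha}))$ containers, each of size at most $N^{3-s}$. A union bound then shows that with positive probability every container meets $L$ in at most $2pN^{3-s}$ lines. This requires
\[ pN^{3-s} \gtrsim N^{\alpha}. \]
Consequently every $3$-independent subfamily of $L$ has size $\lesssim pN^{3-s}$.

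\emph{Optimization.} The last step is to choose $s$ and $p$ to minimise the exponent $\log(pN^{3-s})/\log(pN^3)$, subject to the quadruple-deletion constraint and the container constraint. I expect the improvement from $5/6$ to $4/5$ to come from the dual setting: the number of concurrent quadruples in the line family is smaller than the number of collinear quadruples in the grid, which permits a larger $p$.

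\emph{Main obstacle.} The hardest step is the supersaturation and codegree count for concurrent triples of lines, in particular controlling the pair degrees (how many third lines pass through the intersection point of two given lines) uniformly enough to apply the container lemma. I would first try to transfer the known collinear-triple count in $[n]^3$ directly through point-line duality.
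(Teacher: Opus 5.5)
\textbf{There is a genuine gap.} Your framework is the same as the paper's: random sparsification, deleting one line per surviving concurrent quadruple, and containers driven by supersaturation. However, the ambient family you fix cannot reach $4/5$, and the mechanism you expect to produce the gain is backwards.

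Under duality, concurrent $k$-tuples of $\{y=ax+b:(a,b)\in[N]\times[N^2]\}$ are collinear $k$-tuples of the planar grid $[N]\times[N^2]$ on non-vertical lines. Nothing about $[n]^3$ transfers. Dual lines $b=va+c$ with $|v|\lesssim N$ and $c\lesssim N^2$ each carry $\asymp N$ grid points. So about $N^3$ points of the plane lie on $\asymp N$ of your lines, and $|Q(\mathcal L_0)|\asymp N^7$. This is a factor $N$ more than $|T(\mathcal L_0)|=N^{6+o(1)}$, and more than the $N^{6+o(1)}$ collinear quadruples of $[N]^3$.
\begin{itemize}
\item Deletion therefore forces $p\lesssim N^{-4/3}$, so $n\lesssim N^{5/3}$.
\item On the container side, assume even the best possible random-like supersaturation $d(\mathcal G)\asymp|\mathcal J|^2/N^3$. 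The term $1/(2d\tau^2)$ in the container hypothesis forces $\tau\gtrsim d^{-1/2}$, so the best count you can extract is $\log|\mathcal C|\approx|\mathcal J|\tau\gtrsim N^{3/2}$.
\item The codegree $\Delta_2\approx N$ adds a term $N^4/|\mathcal J|$, which reaches $N^2>n$ as containers approach the parallel-pencil size $N^2$.
\end{itemize}
The union bound thus gives at best $\alpha_3\lesssim N^{3/2+o(1)}=n^{9/10+o(1)}$, which is worse than Balogh--Solymosi. No choice of $s$ and $p$ repairs this.

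\textbf{Missing idea: the shape of the ambient grid.} This is the paper's main new input. Take slopes in $[m]$ and intercepts in $[M]$.
\begin{itemize}
\item $|Q|\asymp m^3M^2$, so $p\asymp(m^2M)^{-1/3}$ and $n\asymp m^{1/3}M^{2/3}$.
\item Containers cannot go below size $\approx M$, since a parallel pencil is $3$-independent.
\item Up to $n^{o(1)}$ factors, the method gives $\alpha_3\lesssim\max\{\sqrt{mM},\,pM,\,m^2\}$. The three terms come from $\Delta_3=1$, the container size, and $\Delta_2\le m$ respectively.
\end{itemize}
Balancing $\sqrt{mM}$ against $pM\asymp M^{2/3}m^{-2/3}$ forces $M=m^7$. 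This is exactly the paper's $\mathcal L=\{\ell_{a,b}:a\in[r],\,b\in[r^7]\}$. Then $n\asymp r^5$, $p\asymp r^{-3}$, containers have size $\le 8r^7$ with $\log|\mathcal C|\lesssim r^{4+\delta}\log^2 r$, and $\alpha_3\le r^{4+2\delta}$.

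\textbf{Where the gain actually comes from.} The improvement over $5/6$ comes from supersaturation, not from fewer quadruples; in this family there are more quadruples than triples. What you need is the random-like bound $|T(\mathcal J)|\ge|\mathcal J|^3/(1000r^8)$ for every $|\mathcal J|\ge 8r^7$. The paper gets this by a short H\"older argument counting incidences with the point grid $[r^6]\times[2r^7]$. The Balogh--Solymosi rate $N^{6-4s}$ is precisely what caps their argument at $5/6$. In your family it does hold, even with $N^{6-3s}$, but only for $s\le 1$. Finally, the codegree bound $\Delta_2\le r$ comes from the number of slopes, not from supersaturation.
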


Curiously and somewhat frustratingly, despite the apparent duality of these two problems, Theorem \ref{thm:BSlines} does not imply a direct improvement to Theorem \ref{thm:BS} itself. The reason for this is that the family of lines $L$ given by Theorem \ref{thm:BSlines} contains large subfamilies of parallel lines. After dualising the problem back to its original form, these bunches of parallel lines become lines with many points on them, and so the property that the line set has no intersecting quadruples is not preserved as a no-four-on-a-line property under dualisation. This highlights a subtle distinction between this problem in the projective and affine settings.

A more general form of this question is the following: given a $4$-independent family $\cF$ of convex sets in $\mathbb R^2$, does there always exist a large subset $\cF' \subset \cF$ such that $\cF'$ is $3$-independent?  Theorem \ref{thm:BSlines} gives a partial negative answer to this problem. On the other hand, a result of Kalai \cite{Ka} implies that any $4$-independent family of convex sets with cardinality $n$ determines at most $\binom{n-1}{2}$ intersecting triples, and a standard probabilistic deletion argument then implies that there exists a $3$-independent subset with cardinality $\Omega(n^{1/2})$. So, the correct exponent for this problem is somewhere in the range $[\frac{1}{2},\frac{4}{5}+\delta]$. This question was considered by Keller and Smorodinsky \cite{KeSm}, where the dual form of the Balogh-Solymosi Theorem was used to give a lower bound for the Hadwiger-Debrunner number $HD_2(p,3)$. Theorem \ref{thm:BSlines} implies a further improvement to their lower bound.

\begin{corollary} \label{cor:HD2}
    For all $\epsilon >0$ and all $p$ sufficiently large, $HD_2(p,3) = \Omega( p^{\frac{5}{4}- \epsilon})$.
\end{corollary}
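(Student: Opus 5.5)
Recall that $HD_d(p,q)$ is the smallest $T$ such that every finite family of convex sets in $\mathbb R^d$ with the $(p,q)$-property can be pierced by $T$ points. The $(p,q)$-property means that among any $p$ members, some $q$ have a common point. The plan is to feed the family $L$ of Theorem \ref{thm:BSlines} into the Keller--Smorodinsky argument \cite{KeSm}, exactly as they used the dual Balogh--Solymosi construction.

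Fix $\epsilon>0$ and choose $\delta>0$ small in terms of $\epsilon$. Take $n$ from Theorem \ref{thm:BSlines} and the corresponding family $L$ of $n$ lines. To obtain compact convex sets, replace each line by a long segment, contained in a large disc that contains all intersection points of $L$. This preserves both the incidence structure among the lines and $4$-independence. Set $p = \lceil n^{4/5+\delta}\rceil$.

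We check the two properties needed.
\begin{enumerate}
\item \emph{The $(p,3)$-property.} Any $p$ of the segments correspond to a subset $L'$ with $|L'|\ge n^{4/5+\delta}$. By Theorem \ref{thm:BSlines}, three lines of $L'$ share a point, which lies inside the disc, so the three segments intersect.
\item \emph{A lower bound on piercing.} Since $L$ is $4$-independent, every point lies on at most $3$ segments. Hence any piercing set has at least $n/3$ points.
\end{enumerate}
Therefore
\[
HD_2(p,3) \ge \frac{n}{3} = \Omega\bigl(p^{1/(4/5+\delta)}\bigr) = \Omega\bigl(p^{5/4-\epsilon}\bigr),
\]
once $\delta$ is chosen small enough.

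The main obstacle is that Theorem \ref{thm:BSlines} only supplies infinitely many $n$, while the corollary is stated for all sufficiently large $p$. I would handle this by monotonicity. $HD_2(p,3)$ is nondecreasing in $p$, since a family with the $(p,3)$-property also has the $(p',3)$-property for every $p'\ge p$. So it suffices that the admissible values of $p$ are not too sparse: given $p$, we need an admissible $p_0\le p$ with $p_0\ge p^{1-o(1)}$.

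I would first look at how the construction is built. If it has a grid-like parameter $N$ with $n$ polynomial in $N$, then consecutive admissible $n$ differ by a factor $N^{O(1)/N}\to 1$, which gives the required density. Failing that, I would construct an example for every large $n$. Take the construction for the largest admissible $n_0\le n$, add disjoint copies, or lines in general position that create no new concurrences. Then check that the $(p,3)$-property survives with $p$ enlarged by at most a constant or $n^{o(1)}$ factor. This needs the admissible $n$ to have ratio $n^{o(1)}$ between consecutive values, so the density of the sequence is the point to verify.

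Finally, let $\delta\to0$ to absorb the resulting $\epsilon$-losses.
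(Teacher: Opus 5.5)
Your proposal is essentially the paper's argument: truncate the lines to segments, get the $(p,3)$-property from the bound in Theorem~\ref{thm:BSlines}, and lower-bound the piercing number by $|L|/3$ using $4$-independence. The one step you leave conditional, density of admissible parameters, is settled as your first option anticipates: the proof of Theorem~\ref{thm:BSlines} yields, for \emph{every} sufficiently large $r$, a family $\cL_r$ with $|\cL_r|\ge cr^5$ and $\alpha_3(\cL_r)\le r^{4+2\delta}$, and the paper takes the largest $r$ with $r^{4+2\delta}<p$, so $\cL_r$ has the $(p,3)$-property and $|\cL_r|/3\gg r^5\gg p^{5/(4+2\delta)}$, with no need for monotonicity or a fallback construction.
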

Full details of this, including the definitions necessary to understand Corollary \ref{cor:HD2} and its proof, are given in Section \ref{sec:HD}.

A key step in the work of Balogh and Solymosi was the following supersaturation lemma for collinear triples in the three dimensional grid $[n]^3$. Given $S \subset \mathbb R^3$, let $t(S)$ denote the number of collinear triples in $S$, i.e. the number of three-element subsets of $S$ such that all elements of the set are collinear.

\begin{theorem}[Lemma 4.3 in \cite{BaSo}] \label{thm:ssl}
    Let $0 \leq s \leq 1$ and let $S \subset [n]^3$ be a set of size $|S|\geq n^{3-s}$. Then $t(S) \gg \frac{n^{6-4s}}{\log n}$.
\end{theorem}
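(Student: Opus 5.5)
The strategy is to count collinear triples by counting lines through the grid. Partition the lines meeting $[n]^3$ in at least two points according to their primitive direction vector $v=(a,b,c)\in\mathbb Z^3$, with $\gcd(a,b,c)=1$. For a fixed $v$, the lines in direction $v$ partition $[n]^3$ into arithmetic progressions. If $\|v\|_\infty\le r$, each such progression has length at most about $n/\|v\|_\infty$, and there are about $n^3\cdot\|v\|_\infty/n$ nonempty lines in direction $v$. Let $N_\ell=|S\cap\ell|$. Convexity (Jensen) gives, for each fixed direction $v$,
\[
\sum_{\ell \parallel v}\binom{N_\ell}{3}\;\gg\; (\#\text{lines})\cdot\Big(\frac{|S|}{\#\text{lines}}\Big)^3 \;\approx\; \frac{|S|^3}{(n^2\|v\|_\infty)^2},
\]
as long as the average $|S|/(n^2\|v\|_\infty)$ is at least a constant, say $\ge 3$. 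The collinear triples counted for distinct directions are distinct, since a collinear triple determines its line and hence its direction. So $t(S)$ is at least the sum of these quantities over all admissible $v$.

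Next I sum over directions. The number of primitive $v$ with $\|v\|_\infty\approx R$ (dyadic) is $\asymp R^3$. The condition that the average line load is $\gg 1$ reads $R\lesssim |S|/n^2 \approx n^{1-s}$. Taking $|S|=n^{3-s}$, each direction at scale $R$ contributes about $n^{9-3s}/(n^4R^2)=n^{5-3s}R^{-2}$. Summing over $R^3$ directions at that scale gives $n^{5-3s}R$. At the largest admissible scale $R\approx n^{1-s}$ this is $n^{6-4s}$, which already exceeds the claimed bound. The logarithm in the statement suggests the author instead sums over all dyadic scales or uses a weaker cruder count, but the main term matches $n^{6-4s}$ up to that loss.

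The main obstacle is making the Jensen step rigorous when the lines in a given direction have very uneven lengths, since lines near the boundary of the cube are short. I would handle this by restricting to the lines of length $\ge c\,n/\|v\|_\infty$. These cover a constant fraction of $[n]^3$, so I would first pass to the subcube $[n/4,3n/4]^3$. If $S$ has at least a constant fraction of its points there, all lines through that region in directions with $\|v\|_\infty\le n^{1-s}/C$ are long. Otherwise I would use a pigeonhole over translates or subcubes: cover $[n]^3$ by boundedly many boxes of half the side length, each positioned so that the points of $S$ in it lie in its central region, and apply the argument to the box containing the most points of $S$.

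I would also restrict to the $\gg R^3$ primitive directions at the top scale $R\approx n^{1-s}/C$. Jensen applies there since the line loads are $\ge$ a large constant on average; one has to subtract the contribution $O(\#\text{lines})$ coming from lines carrying fewer than 3 points, which is fine once $C$ is chosen large. If $s$ is close to $1$, so that $R<1$ and no such directions exist, I would fall back on the three coordinate directions alone. They give $\gg |S|^3/n^4=n^{5-3s}\ge n^{6-4s}$ when $s\ge1$, and this covers the boundary case with the $\log n$ slack.
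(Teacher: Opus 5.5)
Your last step fails, and it cannot be repaired, because the statement as quoted is false at $s=1$. The Jensen bound along the $n^2$ lines parallel to a coordinate axis gives $\gg |S|^3/n^4$ only when the average load $|S|/n^2$ is bounded below by a constant larger than $2$ (say $\ge 3$). For $|S|$ close to $n^2$ it forces nothing.

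Indeed, take the set $\mathcal A$ from the proof of Theorem~\ref{thm:main}, the P\'or--Wood set that the paper itself uses for the case $s=1$ of Corollary~\ref{cor:main}. It has $p^2=n^2$ points in a translate of $[p]^3$ and no collinear triple. The statement with $s=1$ would require $t(S)\gg n^2/\log n$. So the claim that the coordinate directions ``cover the boundary case'' is false. What your argument actually proves is the lemma under the extra hypothesis $|S|\ge 3n^2$ (for instance, any fixed $s<1$ and $n$ large), and the quoted statement needs that restriction. The paper gives no proof of this lemma; it is imported from \cite{BaSo}, so there is no in-paper argument to compare against.

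In that range your argument is complete. The ``main obstacle'' you identify is not real, so you can drop the subcube and pigeonhole detour: Jensen never needs the lines to have comparable lengths.
\begin{itemize}
\item For primitive $v$, every point of $[n]^3$ lies on exactly one line in direction $v$.
\item The number of such lines meeting $[n]^3$ is at most $n^2(|v_1|+|v_2|+|v_3|)\le 3n^2\|v\|_\infty=:M$. This follows by counting first points, exactly as in the proof of Theorem~\ref{thm:main}.
\item Pad with empty lines up to $M$ lines. Apply Jensen to the convex function equal to $\binom{x}{3}$ for $x\ge 2$ and to $0$ on $[0,2]$. This gives $\sum_{\ell\parallel v}\binom{N_\ell}{3}\ge |S|^3/(16M^2)$ whenever $|S|/M\ge 4$.
\end{itemize}
Line lengths only matter for upper bounds.

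Now sum over the $\gg R^3$ primitive directions, taken up to sign, with $\|v\|_\infty\le R=\lfloor |S|/(12n^2)\rfloor$. This yields $t(S)\gg |S|^4/n^6\ge n^{6-4s}$ when $|S|\ge 12n^2$. For $3n^2\le |S|<12n^2$, a single coordinate direction gives $t(S)\ge |S|^3/(27n^4)\gg |S|^4/n^6$.

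So you obtain the bound with no logarithm, which is worth recording. For the set of Theorem~\ref{thm:main}, one has $|S|^4/n^6=(p^2r^3)^4/(pr)^6=p^2r^6$. Together with that theorem, this shows the supersaturation bound is sharp up to absolute constants (for $r\ge 3$), not merely up to logarithmic factors.
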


This lemma is an important contributing factor to establishing the main result of \cite{BaSo}, as the supersaturation lemma is needed for the application of the hypergraph container method to work. It was not clear if Theorem \ref{thm:ssl} was optimal; to the best of our knowledge, a $p$-random subset with $p=n^{-s}$ yielding around $n^{6-3s}$ collinear triples was the example giving the fewest collinear triples. Since any improvement to Theorem \ref{thm:ssl} would improve the exponent in Theorem \ref{thm:BS}, this motivated attempts to get a better exponent than $6-4s$. It turns out that this is not possible; we give a construction showing that Theorem \ref{thm:ssl} is in fact optimal up to logarithmic factors.

    

\begin{theorem} \label{thm:main}

     Let $p\equiv 3 \pmod 4$ be prime, and let $r\ge 1$ be an integer. Then there exists a set $S\subseteq [pr]^3$ with $ |S|=p^2r^3$ such that
\[
t(S)\ll p^2r^6.
\]

\end{theorem}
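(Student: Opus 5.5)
The plan is to lift a set with no three collinear points in $\mathbb F_p^3$ to a union of residue classes in $[pr]^3$. For the set in $\mathbb F_p^3$ I take the paraboloid
\[
Q=\{(x,y,x^2+y^2): x,y\in \mathbb F_p\}\subset \mathbb F_p^3 ,
\]
and I define $S=\{v\in[pr]^3 : v \bmod p\in Q\}$. Each residue class mod $p$ meets $[pr]^3$ in exactly $r^3$ points, and $|Q|=p^2$, so $|S|=p^2r^3$.

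First I show that no affine line of $\mathbb F_p^3$ meets $Q$ in three points. Restrict the equation $z=x^2+y^2$ to the line $u+t(a,b,c)$. This gives a polynomial in $t$ with leading coefficient $a^2+b^2$.
\begin{itemize}
\item If $(a,b)\neq(0,0)$, then $a^2+b^2\neq 0$, because $-1$ is a non-residue when $p\equiv 3 \pmod 4$. So the polynomial is a genuine quadratic and has at most $2$ roots.
\item If $(a,b)=(0,0)$, the line is vertical and meets the graph $Q$ at most once.
\end{itemize}

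Next I transfer this to integer lines. Let $\ell$ be a line in $\mathbb R^3$ containing at least two points of $\mathbb Z^3$, and write $\ell\cap\mathbb Z^3=\{a+kd: k\in\mathbb Z\}$ with $d$ a primitive integer vector. Since $d$ is primitive, $d\not\equiv 0 \pmod p$. Hence the reductions $a+kd \bmod p$ trace an affine line of $\mathbb F_p^3$, and distinct residues of $k$ mod $p$ give distinct points of that line. By the previous step, at most two residues of $k$ mod $p$ correspond to points of $S$. Let $N$ be the number of lattice points of $\ell$ in $[pr]^3$, so $N\le pr/\|d\|_\infty+1$. Then
\[
|S\cap\ell|\le 2\lceil N/p\rceil .
\]
In particular, if $\|d\|_\infty=h>r$, the parameters $k$ range over fewer than $p$ consecutive values. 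Then the residues are distinct, $|S\cap \ell|\le 2$, and $\ell$ carries no collinear triple of $S$.

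Finally I count triples line by line:
\[
t(S)\le\sum_\ell \binom{|S\cap\ell|}{3}.
\]
Only directions $d$ with $\|d\|_\infty=h\le r$ contribute. For such a line, $|S\cap\ell|\ll r/h$, so it contributes $\ll (r/h)^3$ triples. There are $\ll h^2$ primitive directions with $\|d\|_\infty=h$. For a fixed direction, the number of lines meeting $[pr]^3$ is $\ll (pr)^2h$; this follows by projecting the box along $d$, or equivalently by noting that each line has about $pr/h$ points and there are $(pr)^3$ points in total. Summing gives
\[
t(S)\ll \sum_{h\le r} h^2\cdot (pr)^2h\cdot \frac{r^3}{h^3}= p^2r^5\sum_{h\le r}1 = p^2r^6,
\]
as claimed.

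The main obstacle is the bookkeeping in this last step, namely the line count for a fixed direction and the bound $|S\cap\ell|\ll r/h$ uniformly in $h$. The key point making it work is the cutoff $h\le r$, which removes the short lines entirely. If the line count turns out to be delicate, I would instead count ordered pairs (point of $S$, primitive direction) together with the $\ll r/h$ companions along the line. This gives the same bound without counting lines explicitly.
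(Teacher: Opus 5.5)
Your proposal is correct and follows essentially the same route as the paper. It uses the same paraboloid $\{z=x^2+y^2\}$ over $\mathbb F_p$ lifted to all residue classes in the box, the same mod-$p$ periodicity along integer lines (ruling out directions with $\|d\|_\infty>r$ and giving $O(r/h)$ points of $S$ per line otherwise), and the same count of $O(h(pr)^2)$ lines per direction over $\ll h^2$ directions. The one soft spot is your ``equivalently, each line has about $pr/h$ points'' justification of the line count: it does not give an upper bound, because lines clipping a corner of the box contain very few lattice points. Either of two arguments fixes this: the paper's first-point argument (the first lattice point of such a line lies within $h$ of a face of the box), or your own fallback count over pairs (point of $S$, primitive direction), which gives $t(S)\ll |S|\sum_{h\le r}h^2\,(r/h)^2=p^2r^6$ directly.
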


To compare this more cleanly with Theorem \ref{thm:ssl}, we present the following corollary.

\begin{corollary} \label{cor:main}
    For every fixed $0\le s\le 1$, there are arbitrarily large $n$ for which there exists a set $S\subseteq[n]^3$ with
\[
|S|\geq n^{3-s}
\]
and
\[
t(S)\ll n^{6-4s}.
\]
\end{corollary}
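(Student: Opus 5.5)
The plan is to apply Theorem~\ref{thm:main} with $n=pr$ and with the prime $p$ chosen to be of order $n^{s}$. Note that $|S|=p^2r^3=n^3/p$ and $t(S)\ll p^2r^6=n^6/p^4$. The size condition $|S|\ge n^{3-s}$ is equivalent to $p\le n^{s}$. The triple bound $t(S)\ll n^{6-4s}$ holds as soon as $p\gg n^{s}$. So it suffices to find, for arbitrarily large $n$, a factorisation $n=pr$ with $p\equiv 3\pmod 4$ prime and $c\,n^{s}\le p\le n^{s}$ for some constant $c>0$ depending only on $s$.

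I would handle the endpoints separately.
\begin{itemize}
\item For $s=1$, take $r=1$ and $n=p$, where $p$ is any prime $\equiv 3 \pmod 4$; there are infinitely many such primes. Then $|S|=n^2$ and $t(S)\ll n^2$, as required.
\item For $s=0$, take $S=[n]^3$ itself. We must check directly that $t([n]^3)\ll n^6$. Fix a difference vector $d\ne 0$ with $\|d\|_\infty=k$. There are at most $n^3$ pairs $(x,x+d)$, and the line through such a pair meets $[n]^3$ in at most about $n/k$ further points. The number of $d$ with $\|d\|_\infty=k$ is $\ll k^2$. Summing gives
\[
t([n]^3)\ll \sum_{k\le n} k^2\cdot n^3\cdot \frac{n}{k}\ll n^6 .
\]
\end{itemize}

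For $0<s<1$, let $\alpha=s/(1-s)$. Take $r$ large and choose a prime $p\equiv 3\pmod 4$ in the interval $[\tfrac12 r^{\alpha}, r^{\alpha}]$. Such a prime exists for all large $r$ by the prime number theorem in arithmetic progressions, which gives $\pi(x;4,3)\sim x/(2\log x)$. Set $n=pr$. The condition $p\le n^{s}=(pr)^s$ is equivalent to $p^{1-s}\le r^{s}$, i.e.\ $p\le r^{\alpha}$, which holds. Hence $|S|=n^3/p\ge n^{3-s}$. Conversely, $p\ge \tfrac12 r^{\alpha}$ gives $p^{1-s}\ge 2^{-(1-s)}r^{s}$, so $p\ge 2^{-(1-s)}(pr)^{s}\gg n^{s}$. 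Therefore $t(S)\ll n^6/p^4\ll n^{6-4s}$. Since $r$ is arbitrary, $n$ can be taken arbitrarily large.

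The only real input beyond Theorem~\ref{thm:main} is the existence of primes $\equiv 3\pmod 4$ in dyadic intervals, which is standard. The remaining work is just tracking constants, which depend only on $s$.
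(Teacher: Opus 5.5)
Your proof is correct and follows the paper's plan. You treat the endpoints separately, and for $0<s<1$ you apply Theorem~\ref{thm:main} with $n=pr$ and $p\asymp n^{s}$.

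\textbf{Order of choosing the parameters.} This is the one real difference from the paper. You fix $r$ and then need a prime $p\equiv 3 \pmod 4$ in the dyadic window $[\tfrac12 r^{\alpha},r^{\alpha}]$. That forces you to invoke the prime number theorem in arithmetic progressions. The paper instead fixes any large prime $p\equiv 3\pmod 4$ and sets $r=\lceil p^{(1-s)/s}\rceil$. Since $r$ may be any integer, this gives $p^{1/s}\le n\le 2p^{1/s}$, hence $2^{-s}n^{s}\le p\le n^{s}$. It uses only the infinitude of primes $\equiv 3 \pmod 4$. The paper's ordering is therefore more elementary, at no cost.

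\textbf{The case $s=0$.} The paper simply cites $t([n]^3)=\Theta(n^6)$. Your direct count has a small slip. For non-primitive $d$, the line through $x$ and $x+d$ meets $[n]^3$ in about $\gcd(d)\,n/k$ points, not $n/k$. For example, $d=(k,0,0)$ gives a full axis-parallel line with $n$ points. The conclusion $t([n]^3)\ll n^6$ still holds, and either of two fixes works.
\begin{enumerate}
\item Parametrise each triple by its first point $x$ and the primitive direction $v$ of its line. This gives $t([n]^3)\le \sum_x\sum_v (n/\|v\|_\infty)^2\ll n^3\sum_{k\le n}k^2(n/k)^2\ll n^6$.
\item Keep your sum but use the correct count. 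Then $\sum_{\|d\|_\infty=k}\gcd(d)\ll k\sigma(k)$ and $\sum_{k\le n}\sigma(k)\le n^2$, so the total is still $\ll n^4\sum_{k\le n}\sigma(k)\ll n^6$.
\end{enumerate}
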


 \subsubsection*{Notation}
 
Throughout the paper, the standard notation $\ll, \,\gg$ and respectively $O$ and $\Omega$ is applied to positive quantities in the usual way. That is, $X\gg Y$, $Y \ll X,$ $X=\Omega(Y)$ and $Y=O(X)$ all mean that $X\geq cY$, for some absolute constant $c>0$. If both $X=O(Y)$ and $X= \Omega(Y)$ hold then we write $X= \Theta(Y)$. 
All logarithms are in base $2$, unless stated otherwise. The notation $[n]$ refers to the set of the first $n$ positive integers. Given a set $X$, the notation $2^X$ is used for the power set of $X$. Given a hypergraph $\cH=(V,E)$ and any $V' \subset V$, $\cH[V']$ denotes the subhypergraph induced by $V'$, i.e. the hypergraph with vertex set $V'$ whose edge set consists of all the edges of $\cH$ contained in $V'$.

\section{Supersaturation}

The starting point for the proof of Theorem \ref{thm:BSlines} is to define an ambient set of lines $\mathcal L$, within which our final set of lines $L$ will belong. In fact, choosing a good ambient set is the key new contribution to the proof of Theorem \ref{thm:BSlines}, as much of the rest of the proof then follows the framework from \cite{BaSo} closely. In this context, a ``good" family $\cL$ should satisfy the following properties.
\begin{itemize}
    \item We need a quantitatively strong supersaturation result, showing that large subsets of $\cL$ must determine many intersecting triples. This is the content of the forthcoming Lemma \ref{lem:sslL}.
    \item We need a set of lines $\cL$ that is rich in intersecting triples, but not particularly rich in intersecting quadruples. This is the content of the forthcoming Lemma \ref{lem:counting}.
\end{itemize}
For comparison, the proof of Balogh and Solymosi starts out with an ambient set of points $[n]^3$. This choice does particularly well for the analogue of the second point above; the number of collinear quadruples determined by this set is of almost the same order as the number of collinear triples. However, the supersaturation for collinear triples in this set is the main limitation  as to why the exponent $5/6+\delta$ in Theorem \ref{thm:BS} cannot be improved further. It turns out that the supersaturation lemma from \cite{BaSo} is essentially optimal, as we will show in Section \ref{sec:optimal}.

For an arbitrary set $L$ of lines in $\mathbb R^2$, we use $T(L)$ to denote the set of all intersecting triples of lines from $L$, that is
\[
T(L):= \{ A \subset L : |A|=3, \text{ and the three lines in $A$ have a common intersection} \}.
\]
Similarly, let $Q(L)$ denote the set of all intersecting quadruples of lines from $L$.

For $a,b \in \mathbb R$, define $\ell_{a,b}$ to be the line with equation $y=ax+b$. Define
\begin{equation} \label{Ldefn}
\mathcal L:= \{ \ell_{a,b} : a \in [r], b \in [r^7] \},
\end{equation}
where $r$ is a parameter going to infinity. The main result of this section is the following supersaturation lemma for intersecting triples in subsets of $\cL$.

\begin{lemma} \label{lem:sslL}
Let $\mathcal J \subseteq \mathcal L$ such that $|\mathcal J| \geq 8r^7$. Then
\[
|T(\mathcal J)| \geq \frac{|\mathcal J|^3}{1000r^8}.
\]
\end{lemma}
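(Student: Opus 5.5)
Dualise and count collinear triples along a well-chosen family of parallel line classes. The plan is to translate $T(\mathcal J)$ into collinear triples of a planar point set, then run a convexity (Jensen) argument in each of $r^6$ parallel directions.

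\emph{Dualisation.} Send $\ell_{a,b}\in\mathcal L$ to the point $(a,b)\in[r]\times[r^7]$, and let $P\subseteq[r]\times[r^7]$ be the image of $\mathcal J$, so $|P|=|\mathcal J|$. Three lines $\ell_{a_i,b_i}$ pass through a common point $(x,y)$ exactly when $b_i=y-a_ix$ for $i=1,2,3$. That is, the points $(a_i,b_i)$ lie on the non-vertical line $\{(a,b): b=y-xa\}$. Two parallel lines of $\mathcal L$ have equal $a$ and different $b$, so they never meet; correspondingly, vertical lines in the $(a,b)$-plane play no role. Hence $|T(\mathcal J)|$ equals the number of collinear triples of $P$ lying on non-vertical lines. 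It therefore suffices to show that $P$ spans at least $|P|^3/(1000r^8)$ such triples.

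\emph{Counting along integer slopes.} Fix an integer $s$ with $0\le s<r^6$ and consider the lines $b=c+sa$ with $c\in\mathbb Z$. Every point of $[r]\times[r^7]$ lies on exactly one of them, namely the one with $c=b-sa\in[1-sr,\,r^7]$. So the number of such lines meeting the grid is $N\le r^7+sr\le 2r^7$. Let $k_c$ be the number of points of $P$ on the line with intercept $c$, so that $\sum_c k_c=|P|$. The number of collinear triples of $P$ on lines of slope $s$ is then at least $\sum_c\binom{k_c}{3}$.

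I would apply Jensen to the convex function $f(x)=x(x-1)(x-2)/6$ for $x\ge 2$, extended by $0$ below $2$; this extension is still convex and agrees with $\binom{k}{3}$ on nonnegative integers. This gives
\[
\sum_c\binom{k_c}{3}\ \ge\ N f(|P|/N).
\]
Since $|P|\ge 8r^7\ge 4N$, the mean $m=|P|/N$ is at least $4$. For $m\ge4$ we have $f(m)\ge m^3\cdot\tfrac34\cdot\tfrac12/6=m^3/16$. Hence the count for slope $s$ is at least $N\,m^3/16=|P|^3/(16N^2)\ge |P|^3/(64r^{14})$. (Here $N\mapsto |P|^3/N^2$ is decreasing, so replacing $N$ by its upper bound $2r^7$ is legitimate.)

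\emph{Summing over slopes.} A collinear triple determines its line, and hence its slope, uniquely, so triples counted for different $s$ are distinct. Summing over the $r^6$ values of $s$ gives
\[
|T(\mathcal J)|\ \ge\ r^6\cdot\frac{|P|^3}{64r^{14}}\ =\ \frac{|\mathcal J|^3}{64r^8}\ \ge\ \frac{|\mathcal J|^3}{1000r^8}.
\]

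The main point needing care is the choice of slope range. The slopes must be numerous, about $r^6$, yet each parallel class must use only $O(r^7)$ lines, so that the average occupancy stays bounded below by a constant ($\ge4$). This is exactly what the hypothesis $|\mathcal J|\ge 8r^7$ and the aspect ratio $r\times r^7$ of the grid are designed to ensure. The convexity step is routine once the extended function $f$ is used.
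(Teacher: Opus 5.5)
Your proof is correct and is essentially the paper's argument written in the dual. Your slope classes $b=c+sa$ with $0\le s<r^6$ correspond exactly to the candidate intersection points $(-s,c)$, and these play the role of the paper's box $P=[r^6]\times[2r^7]$, in which each line of $\mathcal J$ meets each column exactly once. The only differences are cosmetic: you apply Jensen column by column to the convex extension of $\binom{x}{3}$, whereas the paper discards points with $n(p)\le 2$ and applies Hölder over the whole box, and your version gives the slightly better constant $1/64$.
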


\begin{proof}
    We restrict our attention to intersecting triples of $\mathcal J$ occurring in the set $P=[r^6] \times [2r^7]$. For $p \in P$, define
    \[
n(p):=| \{ \ell \in \mathcal J : p \in \ell \}|
    \]
    and note that
    \[
    \sum_{p \in P} n(p)= \sum_{\ell \in \mathcal J} |\ell \cap P|= |\mathcal J|r^6.
    \]
    The contribution to this sum from points with $n(p) \leq 2$ is at most $4r^{13}$. It then follows from the assumed lower bound on $|\mathcal J|$ that
    \[
    \sum_{p \in P: n(p) \geq 3} n(p) \geq  \frac{|\mathcal J|r^6}{2}.
    \]
    An application of Hölder's inequality gives
    \begin{align*}
    \frac{|\mathcal J|^3r^{18}}{8} \leq  \left (  \sum_{p \in P: n(p) \geq 3} n(p) \right )^3 \leq |P|^2 \left (\sum_{p \in P: n(p) \geq 3} n(p)^3 \right ) &\leq (2r^{13})^2 \cdot 27 \left (\sum_{p \in P: n(p) \geq 3} \binom{n(p)}{3} \right )
    \\& \leq 108 r^{26} |T(\mathcal J)|.
    \end{align*}
    A rearrangement of this inequality completes the proof.
\end{proof}

Note that the condition that $|\cJ| \geq 8r^7$ is necessary, and it is optimal up to the constant, since $\cL$ contains subsets of size $r^7$ such that all of the lines in the subset are parallel, and such a subset determines zero triple intersections.

We now record here some information about the number of intersecting triples and quadruples determined by $\cL$.

\begin{lemma} \label{lem:counting}
    For the set $\cL$ of lines defined in \eqref{Ldefn}
    \[
    |T( \cL)| \leq c_1 r^{16} \log r \,\, \text{and} \,\,\,  |Q(\cL)| \leq c_2 r^{17},
   \]
    where $c_1,c_2 \geq 1$ are absolute constants.
\end{lemma}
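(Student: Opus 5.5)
Both bounds come from parametrizing concurrent families of lines by their slopes and counting the admissible intercepts directly.

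\emph{Triples.} Three lines $\ell_{a_i,b_i}$ in $\cL$ with a common point must have pairwise distinct slopes, since parallel lines are disjoint. Take $a_1<a_2<a_3$ in $[r]$. The lines meet at a common point $(x,y)$ exactly when $b_i = y - a_i x$ for each $i$. Eliminating $x$ and $y$ gives the single linear condition
\[
(a_3-a_2)b_1 + (a_1-a_3)b_2 + (a_2-a_1)b_3 = 0 .
\]
Put $u=a_2-a_1$ and $v=a_3-a_2$, and let $g=\gcd(u,v)$. The condition then reads
\[
v(b_2-b_1) = u(b_3-b_2).
\]
So $b_2-b_1 = k\,u/g$ and $b_3-b_2 = k\,v/g$ for some integer $k$. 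Once $b_1\in[r^7]$ is fixed, the constraint $b_3\in[r^7]$ forces $|k|\le r^7 g/(u+v)$. The number of intercept triples is therefore at most $r^7\bigl(2r^7 g/(u+v)+1\bigr)$.

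Sum this over the slope triples, i.e.\ over $a_1$ (at most $r$ choices) and over $u,v\le r$. The total is at most
\[
r^{17}\sum_{u,v\le r}\frac{\gcd(u,v)}{u+v} + r^{17}.
\]
To bound the sum, write $u=gu'$ and $v=gv'$. It becomes
\[
\sum_{g\le r}\ \sum_{u',v'\le r/g}\frac{1}{u'+v'} \ll \sum_{g\le r}\frac{r}{g} \ll r\log r,
\]
since $\sum_{u',v'\le N} 1/(u'+v') \ll N$. This gives a main term $\ll r^{15}\cdot r\log r = r^{16}\log r$.

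The stray $+1$ term needs more care, because it naively contributes $r^{3}\cdot r^{7}$. It only matters when $r^7 g/(u+v)<1$, which is impossible since $u+v\le r$. So it is absorbed into the main term, and $|T(\cL)|\ll r^{16}\log r$.

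\emph{Quadruples.} Four concurrent lines have four distinct slopes $a_1<\dots<a_4$. Choose the slopes in at most $r^4$ ways and $b_1,b_2$ in at most $r^{14}$ ways. This fixes the common point, and then $b_3$ and $b_4$ are determined, giving the bound $r^{18}$. That is too weak, so I would refine it using the divisibility structure from the triple count.

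Write $u=a_2-a_1$, $g=\gcd(u,a_3-a_2)$ and $g'=\gcd(u,a_4-a_2)$. The constraint on $b_3$ forces $b_2-b_1$ to be divisible by $u/g$, and the constraint on $b_4$ forces it to be divisible by $u/g'$. Hence $b_2-b_1$ lies in a set of size at most
\[
\frac{2r^7}{\operatorname{lcm}(u/g,\,u/g')}\le \frac{2r^7}{u/\gcd(g,g')}.
\]
After choosing $b_1$ (at most $r^7$ ways), the quadruple is determined. The count is therefore
\[
\ll r^{14}\sum_{a_1,\dots,a_4}\frac{\gcd(g,g')}{u}.
\]
Summing over $a_1$ contributes a factor $r$. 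For fixed $u$, summing over $a_3,a_4$ gives $\sum_{d\mid u} d\cdot\#\{(a_3,a_4): d\mid \gcd \text{ of both differences}\}\ll \sum_{d\mid u} d\,(r/d)^2 = r^2\sum_{d\mid u}1/d$. Summing over $u$ then gives $\sum_{u\le r} u^{-1}\sigma_{-1}(u)\ll \log r$.

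This totals $r^{14}\cdot r\cdot r^2\log r = r^{17}\log r$, which is off by a log. The main obstacle is removing this logarithm. Two refinements should do it. First, use the second pair: $b_3-b_2$ and $b_4-b_3$ impose additional divisibility by $(a_3-a_2)/\gcd$, which gains a second denominator. Second, choose $u$ to be the largest consecutive gap, so that the harmonic sum over $u$ is replaced by a bounded one. If that fails, I would instead use the parametrization by the intersection point $(x,y)=(m/q,\cdot)$ with $q\le r$. Lines of $\cL$ through a point with $x$-coordinate of denominator $q$ have slopes in a single residue class mod $q$, so there are about $r/q$ of them. The number of such rich points is about $r^{8}\cdot q\,\phi(q)$ per scale, giving $\sum_q \phi(q)\, r^{8}(r/q)^4 \ll r^{12}\sum_q q^{-2}$, with the summation bounded and hence no logarithm. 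I would make this point-counting version the primary route for $Q(\cL)$, being careful with the range of $x$ needed to keep $y$ meaningful.
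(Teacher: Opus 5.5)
\textbf{The triple bound.} Your bound for $|T(\cL)|$ is correct. The display should read $2r^{15}\sum_{u,v}\gcd(u,v)/(u+v)+r^{10}$ rather than $r^{17}(\cdots)+r^{17}$, and that is what you actually use afterwards. It is a legitimate alternative to the paper's route. The paper dualises to collinear triples of $[r]\times[r^7]$ and, for each point, sums $(\min\{r/u,r^7/|v|\})^2$ dyadically over primitive directions. You instead count intercept triples directly through the gcd structure of the slope gaps.

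\textbf{The gap: the quadruple bound.} The claim $|Q(\cL)|\le c_2r^{17}$ is not proved. The only computation you complete gives $r^{17}\log r$, and what follows is a list of possible repairs, none of them carried out. The repair you designate as primary is wrong as written. Points whose abscissa has denominator $q$ and which carry about $r/q$ lines number about $r^{13}q\phi(q)$, not $r^8q\phi(q)$. Indeed, for $|x|\le r^6$ there are about $r^6\phi(q)$ abscissae and about $qr^7$ admissible ordinates in $\frac1q\mathbb Z$. Your resulting bound $r^{12}$ cannot hold. The integer points with $1\le x\le r^6/2$ and $rx<y\le r^7+x$ number $\gg r^{13}$, and each lies on $r$ lines of $\cL$, so $|Q(\cL)|\gg r^{17}$. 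You also flag, but do not treat, the range $r^6<|x|<r^7$, where fewer lines pass through each point.

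\textbf{How to close it.} Your refinement (2) is the idea that actually removes the logarithm, but it has to be executed. Take as base pair the lines with extreme slopes $a_1<a_4$; the largest consecutive gap works equally well, since the span is then at most three times that gap. Set $D=a_4-a_1$ and $g_i=\gcd(D,a_i-a_1)$ for $i=2,3$.
\begin{itemize}
\item The relations $(a_i-a_1)(b_4-b_1)=D(b_i-b_1)$ force $D/\gcd(g_2,g_3)$ to divide $b_4-b_1$.
\item Hence there are at most $3r^{14}\gcd(g_2,g_3)/D$ choices of $(b_1,b_4)$, and these determine $b_2,b_3$.
\item The key gain is that $a_2,a_3$ now range over an interval of length $D$ rather than $r$. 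So $\sum_{a_2,a_3}\gcd(g_2,g_3)\le\sum_{d\mid D}d\,(D/d)^2=D^2\sigma_{-1}(D)$.
\item The total is therefore $\ll r^{14}\sum_{a_1}\sum_{D\le r}D\,\sigma_{-1}(D)\ll r^{17}$.
\end{itemize}

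\textbf{Comparison with the paper.} The same phenomenon appears in the paper's dual count. For a fixed point of $[r]\times[r^7]$ and primitive direction $(u,v)$, the weight $(\min\{r/u,r^7/|v|\})^3$ turns the dyadic sum into $\sum_j 2^j$, which is dominated by its top term. For triples, by contrast, every dyadic scale contributes equally, and this produces the $\log r$. A single point-plus-direction computation thus handles both bounds uniformly.
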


\begin{proof}
    Similarly to the previous proof, let $p=(p_1,p_2) \in \mathbb R^2$ and consider
    \[
n(p):=| \{ \ell \in \mathcal L : p \in \ell \}|.
    \]
    This quantity now counts \textit{all} lines in $\cL$ which pass through $p$. Observe that
    \[
    n(p)= |\{(a,b) \in [r] \times [r^7] : p_2=ap_1+b \}|.
    \]
    Therefore, $n(p)$ is equal to the number of points from $S=[r] \times  [r^7]$ on the line with equation 
    \begin{equation} \label{dualline}
    y=-p_1x+p_2.
    \end{equation}
    It follows that the number of intersecting triples occurring at $p$ is equal to the number of collinear triples from $S$ on this line. Repeating this for all $p \in \mathbb R^2$, it follows that $|T(\cL)|$ is equal to the number of collinear triples in $S$, excluding the collinear triples on vertical lines (since the line defined in \eqref{dualline} can never be vertical). It remains to bound this quantity.

   We first note that the number of collinear triples in $S$ on horizontal lines is at most $r^{10}$, and so it will be sufficient to upper bound the number of collinear triples on non-horizontal and non-vertical lines.

   Fix a point $q \in S$. We will count the number of collinear triples on the lines through $q$ and then sum over all $q \in S$. A non-vertical and non-horizontal line through $q$ which contains at least one other point of $S$ can be written as
   \[
   \ell= \{ q +  t(u,v) : t \in \mathbb R \}
   \]
   where $u \in \mathbb [r]$, $v \in \{-r^7,\dots,r^7\} \setminus \{0\}$ and $\gcd(u,v)=1$.  This line contains at most $\min \left \{ \frac{r}{u}, \frac{r^7}{|v|} \right \}$ other points from $S$. Summing over all possible directions (and discarding the requirement that $u$ and $v$ are coprime), it follows that the number of collinear triples involving $q$ is at most
   \[
   \sum_{u=1}^{r} \sum_{v \in \{-r^7,\dots,r^7\} \setminus \{0\}}  \left(\min \left \{ \frac{r}{u}, \frac{r^7}{|v|} \right\}  \right )^2.
   \]
   A dyadic decomposition then gives that the number of collinear triples involving $q$ is at most
   \begin{align*}
      \sum_{u=1}^{r} \sum_{v \in \{-r^7,\dots,r^7\} \setminus \{0\}} \left (\min \left \{ \frac{r}{u}, \frac{r^7}{|v|} \right \} \right )^2 & \leq  \sum_{j=0}^{\lceil\log r \rceil} \sum_{(u,v) \in \mathbb N \times  \mathbb Z^* : 2^j \leq \min \left \{ \frac{r}{u}, \frac{r^7}{|v|} \right \} < 2^{j+1}} (2^j)^2
      \\& \ll \sum_{j=0}^{ \lceil \log r \rceil} \frac{r}{2^j} \frac{r^7}{2^j}(2^j)^2 \ll r^{8} \log r.
   \end{align*}
   Summing over all $q \in S$, it follows that there are at most $O(r^{16}\log r)$ collinear triples in $S$ on non-vertical lines, and thus $|T(\cL)| \ll r^{16} \log r$. 

   The argument bounding $|Q(\cL)|$ is similar. We need to count the number of collinear quadruples in $S$ on non-vertical lines. Horizontal lines contribute at most $r^{11}$ such quadruples. Fixing $q \in S$ and a primitive non-axis-parallel direction $d=(u,v)$, the number of collinear quadruples involving $q$ on the line through $q$ with direction $d$ is at most $\left (\min \left \{ \frac{r}{u}, \frac{r^7}{|v|} \right \} \right )^3$, and so
   \begin{align*}
     |Q(\cL)| &  \leq r^{11}+ r^8 \sum_{u=1}^{r} \sum_{v \in \{-r^7,\dots,r^7\} \setminus \{0\}} \left (\min \left \{ \frac{r}{u}, \frac{r^7}{|v|} \right \} \right )^3 
      \\& \leq r^{11}+ r^8 \sum_{j=0}^{\lceil \log_r \rceil } \sum_{(u,v) \in \mathbb N \times  \mathbb Z^* : 2^j \leq \min \left \{ \frac{r}{u}, \frac{r^7}{|v|} \right \} < 2^{j+1}} (2^j)^3
      \\& \ll r^{11}+  r^8 \sum_{j=0}^{ \lceil \log r \rceil } \frac{r}{2^j} \frac{r^7}{2^j}(2^j)^3
       = r^{11}+  r^{16} \sum_{j=0}^{ \lceil \log r \rceil} 2^j \ll r^{17}.
   \end{align*}
\end{proof}

The choice of the highly unbalanced parameter grid $[r]\times[r^7]$ defining the line set $\cL$ results from an optimization. More generally consider the family of lines
\[
 \{ \ell_{a,b} : a \in [m], b \in [n] \}.
\] 
This set gives rise to approximately $m^2n^2$ intersecting triples (up to a logarithmic factor) and $m^3n^2$ intersecting quadruples. If we choose $m$ to be much smaller than $n$, the number of triples and quadruples become closer, which is advantageous for the forthcoming container application. On the other hand, this family of lines contains large parallel pencils of cardinality $n$. If $n$ gets too large this becomes problematic for the argument, as these large parallel pencils become hard to avoid.

\section{Application of the container theorem}

As was the case in \cite{BaSo}, the proof of Theorem \ref{thm:BSlines} makes use of hypergraph containers. The theory of hypergraph containers was developed independently by Balogh, Morris and Samotij \cite{BMS} and Saxton and Thomason \cite{SaTh}. Roughly speaking, the statement that we use says that if a hypergraph has a reasonably good edge distribution, we can obtain strong quantitative information about where the independent sets of the hypergraph may be found. Results of this type have led to several major breakthroughs in combinatorics in recent years; see \cite{BMS2} for a survey of this topic. 

Before stating the result, we need to establish some notation. For a $3-$uniform hypergraph $\cH=(V,E)$ and $v \in V$, $d(v)$ denotes the degree of $v$, i.e. the number of edges which contain $v$. Let $d(\cH)$ denote the average degree of $\cH$, so
\begin{equation} \label{ddefn}
d(\cH)= \frac{1}{|V|}\sum_{v \in V} d(v)=  \frac{3|E|}{|V|}.
\end{equation}
We also define the \emph{co-degree} for a subset $S \subseteq V$ of vertices as
$$d(S) = |\{ e \in E(\cH) : S \subseteq e \}|.$$
Using this definition we define the \emph{maximum co-degree} $\Delta_2(\mathcal H)$ as
$$\Delta_2(\cH) = \max_{\substack{S \subseteq V \\ |S| = 2}}d(S).$$

We will use the following formulation of the container theorem, which is a special case of Corollary 3.6 in \cite{SaTh}.

\begin{theorem} \label{thm:container}
Let $\cG=(V,E)$ be a $3-$uniform hypergraph on $n$ vertices, and let $\epsilon, \tau \in (0,1/2)$. Suppose that
\begin{equation} \label{cond1} 
\frac{\Delta_2(\cG)}{d(\cG) \cdot \tau}+ \frac{1}{2d(\cG) \cdot \tau^{2}} \leq \frac{\epsilon}{288}
\end{equation}
and
\begin{equation} \label{cond2}
\tau < \frac{1}{21600}.
\end{equation}
Then there exists a set $\cC$ of subsets of $V$ such that
\begin{enumerate}
    \item if $A \subseteq V$ is an independent set then there exists $C \in \cC$ such that $A \subseteq C$;
    \item $|E(\cG[C])| \leq \epsilon |E(\cG)|$ for all $C \in \cC$;
    \item $\log |\cC| \leq c n  \tau \log(\frac{1}{\epsilon})  \log(\frac{1}{\tau})$,  
\end{enumerate}
where $c$ is an absolute constant.
\end{theorem}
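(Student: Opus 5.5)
The plan is to obtain Theorem \ref{thm:container} as a direct specialization of the general container theorem of Saxton and Thomason (Corollary 3.6 in \cite{SaTh}) to the case of $3$-uniform hypergraphs. I would take that corollary as a black box. The only real work is to translate its hypothesis, which is phrased through the \emph{co-degree function} $\delta(\cG,\tau)$, into the elementary quantities $\Delta_2(\cG)$ and $d(\cG)$ used in \eqref{cond1}.

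First I would recall the definitions from \cite{SaTh} for an $k$-uniform hypergraph with average degree $d$. For $v\in V$ and $2\le j\le k$, let $d^{(j)}(v)$ be the maximum co-degree of a $j$-set containing $v$, and put $\delta_j=\frac{1}{nd}\sum_v d^{(j)}(v)$. Then
\[
\delta(\cG,\tau)=2^{\binom{k}{2}-1}\sum_{j=2}^{k}2^{-\binom{j-1}{2}}\frac{\delta_j}{\tau^{j-1}}.
\]
Up to the exact normalization in their paper, which I would check, the corollary says the following: if $\delta(\cG,\tau)\le \epsilon/(12k!)$ and $\tau$ is below an explicit absolute threshold depending on $k$, then there is a container family $\cC$. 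This family satisfies (1), $e(\cG[C])\le \epsilon e(\cG)$, and $\log|\cC|\le c(k)\,n\tau\log(1/\epsilon)\log(1/\tau)$.

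Second, I would specialize to $k=3$. Since $d^{(2)}(v)\le\Delta_2(\cG)$ for every $v$, we get $\delta_2\le \Delta_2(\cG)/d(\cG)$. A $3$-set has co-degree at most $1$ in a $3$-uniform hypergraph, so $\delta_3\le 1/d(\cG)$. Substituting these gives
\[
\delta(\cG,\tau)\le C_0\Bigl(\frac{\Delta_2(\cG)}{d(\cG)\tau}+\frac{1}{2d(\cG)\tau^2}\Bigr)
\]
for an explicit constant $C_0$ coming from the powers of $2$ in the definition. Hypothesis \eqref{cond1} then makes the right-hand side at most $C_0\epsilon/288$. The constant $288$ is chosen so that this is at most $\epsilon/(12\cdot 3!)=\epsilon/72$, or whatever the precise threshold in \cite{SaTh} turns out to be. Likewise \eqref{cond2} is chosen to meet the upper bound on $\tau$ required there; with $k=3$ this is some explicit number, and $1/21600$ should lie below it. Conclusions (1)--(3) are then read off directly, with $c=c(3)$ absolute. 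Our definition of $d(\cG)$ as the average degree agrees with theirs via \eqref{ddefn}.

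The main obstacle is purely bookkeeping: reconciling the normalizations of \cite{SaTh}. In particular I would need to confirm the exact power of $2$ in front of $\delta(\cG,\tau)$, the factor $12k!$ versus some variant, and the precise upper bound on $\tau$. This is needed to confirm that the constants $288$ and $21600$ suffice. If the stated constants in one version of \cite{SaTh} differ, I would first simply check that \eqref{cond1} and \eqref{cond2} are at least as strong as the hypotheses there. Since only the sufficiency direction is needed, any slack in the constants is harmless.
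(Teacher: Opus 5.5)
Your proposal is correct and follows the paper's route: the paper gives no argument beyond stating that this is the $3$-uniform special case of Corollary~3.6 in \cite{SaTh}. Your constants check out exactly: for $k=3$ the prefactor is $2^{\binom{3}{2}-1}=4$, with weights $1$ and $\tfrac12$ on $\delta_2/\tau$ and $\delta_3/\tau^2$, so \eqref{cond1} gives $\delta(\cG,\tau)\le 4\epsilon/288=\epsilon/(12\cdot 3!)$, and $21600=200\cdot(3!)^2\cdot 3$, which (as I recall the corollary) is its $k=3$ threshold $\tau\le 1/(200\,k!^2k)$.
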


We now define a $3$-uniform hypergraph $\mathcal H$ encoding triple intersection determined by the line set $\mathcal L$ defined in \eqref{Ldefn}. The vertex set of $\mathcal H$ is $\mathcal L$, and three distinct elements $\ell_1, \ell_2, \ell_3 \in  \mathcal L$ form an edge in $\mathcal H$ if and only if $\ell_1 \cap \ell_2 \cap \ell_3 \neq \emptyset$. Theorem \ref{thm:container} will be used to prove the following container lemma for independent sets in induced subhypergraphs of $\mathcal H$.

\begin{lemma} \label{lem:cont1}
   Let $r$ be a sufficiently large integer and let $\cL$ be the line set defined in \eqref{Ldefn}. Let $\mathcal J \subseteq \mathcal L$ such that $|\cJ| \geq 8r^7$ and let $\cG=\mathcal H[ \mathcal J]$ be the subhypergraph of $\mathcal H$ induced by $\mathcal J$. For all $0<\delta <1$, there exists a family of sets $\mathcal C_{\mathcal J} \subseteq 2^{\mathcal J}$ such that the following three statements hold.
    \begin{itemize}
        \item If $A \subseteq \mathcal J$ is an independent set in $\cG$ then there exists $C \in \mathcal C_{ \mathcal J}$ such that $A \subseteq C$.
        \item For all $C \in \mathcal C_{ \mathcal J}$,
        \[
        |E(\cH[C])| \leq r^{-\delta} |E( \cH[\cJ])|
        \]
        \item For some absolute constant $c'$, we have 
        \[
        \log | \cC_{\cJ}| \leq c'\delta r^{4+\delta} \log^2 r .
        \]
    \end{itemize}
\end{lemma}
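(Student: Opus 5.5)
We apply Theorem \ref{thm:container} to $\cG=\cH[\cJ]$ with $\epsilon = r^{-\delta}$ and a suitable $\tau$. We check conditions \eqref{cond1} and \eqref{cond2}, and then read off the three conclusions. Conclusions (1) and (2) of Theorem \ref{thm:container} give the first two bullet points directly, since $E(\cG[C])=E(\cH[C])$ for $C\subseteq \cJ$. So all the work lies in choosing $\tau$ and estimating $d(\cG)$ and $\Delta_2(\cG)$.

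\textbf{Degree estimates.} By Lemma \ref{lem:sslL}, $|E(\cG)|=|T(\cJ)|\ge |\cJ|^3/(1000r^8)$. Hence
\[
d(\cG)=\frac{3|E(\cG)|}{|\cJ|}\ge \frac{3|\cJ|^2}{1000 r^8}.
\]
For the co-degree, fix two distinct lines $\ell_1,\ell_2\in\cJ$. If they are parallel, they lie in no edge. Otherwise they meet in a single point $p$. Any third line through $p$ has a slope $a\in[r]$, and this slope determines the line because it must pass through $p$. Hence $\Delta_2(\cG)\le r$.

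\textbf{Choice of $\tau$.} Condition \eqref{cond1} needs
\[
\frac{r}{d\tau}\ll \epsilon=r^{-\delta} \quad\text{and}\quad \frac{1}{d\tau^2}\ll r^{-\delta}.
\]
With $d\gg |\cJ|^2/r^8$, the first requires $\tau\gg r^{9+\delta}/|\cJ|^2$ and the second requires $\tau\gg r^{4+\delta/2}/|\cJ|$. Since $|\cJ|\ge 8r^7$, the first is at most $r^{9+\delta}/(8r^7|\cJ|)\approx r^{2+\delta}/|\cJ|$, so the second dominates. We therefore take
\[
\tau = \frac{C r^{4+\delta}}{|\cJ|}
\]
for a large absolute constant $C$. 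The slightly wasteful exponent $\delta$ in place of $\delta/2$ gives room to absorb constants such as $288$ and $1000$ for $r$ large. Condition \eqref{cond2} and $\tau<1/2$ hold because $|\cJ|\ge 8r^7$ forces $\tau\le Cr^{\delta-3}/8$, which is tiny for large $r$ since $\delta<1$. We also have $\epsilon=r^{-\delta}<1/2$ for large $r$.

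\textbf{Container count.} Conclusion (3) of Theorem \ref{thm:container} gives
\[
\log|\cC_\cJ|\le c|\cJ|\tau\log(1/\epsilon)\log(1/\tau).
\]
Here $|\cJ|\tau = Cr^{4+\delta}$ and $\log(1/\epsilon)=\delta\log r$. Also $\log(1/\tau)\le\log|\cJ|\le \log(r^8)=8\log r$. This yields $\log|\cC_\cJ|\le c'\delta r^{4+\delta}\log^2 r$, as required.

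\textbf{Main obstacle.} The main care point is the bookkeeping in \eqref{cond1}: we must confirm that the codegree term is dominated by the $\tau^{-2}$ term uniformly over all $|\cJ|\ge 8r^7$, and that constants are absorbed by the spare power $r^{\delta/2}$. The codegree bound $\Delta_2\le r$ itself is the only structural input beyond Lemma \ref{lem:sslL}.
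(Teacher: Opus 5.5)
Your proposal is correct and follows the paper's proof essentially line for line: the same codegree bound $\Delta_2(\cG)\le r$ from the slope-counting argument, the same lower bound on $d(\cG)$ from Lemma \ref{lem:sslL}, and the same parameters $\tau = Kr^{4+\delta}/|\cJ|$, $\epsilon = r^{-\delta}$ in Theorem \ref{thm:container}. You also spell out two details the paper leaves implicit: the container count via $\log(1/\tau)\le \log|\cL| = 8\log r$, and the remark that $\epsilon<1/2$ needs $r$ large in terms of $\delta$.
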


\begin{proof} 
    Note first that 
    \begin{equation} \label{Deltabound}
    \Delta_2(\cG) \leq r.
    \end{equation} 
    Indeed, fix any two fixed lines $\ell_1,\ell_2 \in \cJ$. If they are parallel then they do not intersect and so $d(\{ \ell_1,\ell_2\})=0$. If they are not parallel then they have a unique point of intersection $p$. Since the slopes of the lines in $\cL$ come from a set with cardinality $r$, there exist at most $r$ lines from $\cL$ (and thus also at most $r$ lines from $\cJ$) which pass through $p$.

    Next, observe that Lemma \ref{lem:sslL} implies that
    \begin{equation} \label{dbound}
    d(\cG) = \frac{3|T(\cJ)|}{|\cJ|} \geq \frac{3|\cJ|^2}{1000r^8}.
    \end{equation}
    Apply Theorem \ref{thm:container} for the hypergraph $\cG$ with
    \[
    \tau=K \frac{r^{4+ \delta}}{|\cJ|}, \,\, \epsilon= r^{-\delta},
    \]
    where $K$ is a sufficiently large absolute constant. It then follows that the main condition \eqref{cond1} from Theorem \ref{thm:container} is satisfied. Indeed, by \eqref{Deltabound}, \eqref{dbound} and the assumed lower bound on $|\cJ|$ in the statement of the lemma, 
    \begin{align*}
    \frac{\Delta_2(\cG)}{d(\cG) \tau} + \frac{1}{2d(\cG) \tau^2} & \leq \frac{1000r^{5-\delta}}{3K|\cJ|} + \frac{1000}{6K^2r^{2\delta}}
     \leq \frac{1000}{3Kr^{\delta}} \leq \frac{1}{288r^{\delta}},
    \end{align*}
    where the last inequality is valid for $K$ sufficiently large.
    Condition \eqref{cond2} follows from the assumptions that $|\cJ| \geq 8r^7$ and that $r$ is sufficiently large. For some absolute constant $c'$, we obtain a set $\mathcal C_{\cJ} \subseteq 2^{\cJ}$ such that
    \[
    \log | \cC_{\cJ}| \leq c'\delta r^{4+\delta} \log ^2r.
    \]
    We also have that, for all $C \in  \cC_{\cJ}$,
    \[
    |E(\cH[C])|=|E( \cG[C])| \leq \epsilon |E(\cG)|=r^{-\delta}|E(\cH[\cJ])|.
    \]

    \end{proof}

By iteratively applying Lemma \ref{lem:cont1}, we obtain a family of containers for the hypergraph $\cH$ with the properties we need.

\begin{lemma} \label{lem:cont2}
   Let $0 < \delta < 1$ and let $r$ be sufficiently large. Let $\cL$ be the set of lines defined in \eqref{Ldefn} and let $\cH$ be the $3$-uniform hypergraph encoding intersecting triples of lines from $\cL$. There exists a set $\cC \subseteq 2^{\cL}$ such that the following three statements hold.
   \begin{itemize}
        \item If $A \subseteq \mathcal L$ is an independent set in $\cH$ then there exists $C \in \mathcal C$ such that $A \subseteq C$.
        \item For all $C \in \mathcal C$,
        \[
        |C| \leq 8r^7.
        \]
        \item For some constant $c$, we have 
        \[
        \log | \cC| \leq c\delta r^{4+\delta} \log^2 r .
        \]
    \end{itemize}
\end{lemma}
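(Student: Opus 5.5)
The plan is to apply Lemma \ref{lem:cont1} repeatedly, refining every container that is still too large. The parameter fed into Lemma \ref{lem:cont1} will be smaller than $\delta$, so that the spare factor $r^{\delta/2}$ absorbs the $1/\delta$ cost of the number of rounds and leaves the required factor $\delta$.

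\textbf{Setup.} Put $\delta' = \delta/2$ and start with the single container $\cC_0=\{\cL\}$. Given a family $\cC_i$, build $\cC_{i+1}$ as follows. Each $C\in\cC_i$ with $|C|\le 8r^7$ is kept unchanged. Each $C\in\cC_i$ with $|C|>8r^7$ is replaced by the family $\cC_C$ obtained from Lemma \ref{lem:cont1} applied with $\cJ=C$ and parameter $\delta'$.

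By induction, every independent set $A$ of $\cH$ lies in some member of $\cC_i$. Indeed, if $A\subseteq C$ then $A$ is independent in $\cH[C]$, so it lies in some member of $\cC_C$. Also by induction, every member of $\cC_i$ that is still large satisfies
\[
|E(\cH[C])| \le r^{-i\delta'}|E(\cH)| \le c_1 r^{16-i\delta'}\log r,
\]
where the last step uses Lemma \ref{lem:counting}.

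\textbf{Termination.} On the other hand, Lemma \ref{lem:sslL} gives, for any $C$ with $|C|>8r^7$,
\[
|E(\cH[C])| = |T(C)| \ge \frac{(8r^7)^3}{1000 r^8} > \frac{r^{13}}{2}.
\]
Take $k=\lceil 4/\delta'\rceil$. Then $c_1 r^{16-k\delta'}\log r \le c_1 r^{12}\log r < r^{13}/2$ once $r$ is large. So no member of $\cC_k$ has size exceeding $8r^7$, and $\cC:=\cC_k$ satisfies the first two bullet points.

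\textbf{Counting.} Each member of $\cC_{i+1}$ comes from a member of $\cC_i$ together with a choice inside some $\cC_C$. By the third bullet of Lemma \ref{lem:cont1}, this choice costs at most $\log|\cC_C|\le c'\delta' r^{4+\delta'}\log^2 r$ in the logarithm. Hence
\[
\log|\cC| \le k\, c'\delta' r^{4+\delta/2}\log^2 r \le 5c'\, r^{4+\delta/2}\log^2 r .
\]
For $r$ sufficiently large in terms of $\delta$ we have $r^{\delta/2}\ge 5/\delta$, and then the right-hand side is at most $c'\delta\, r^{4+\delta}\log^2 r$. This gives the third bullet with $c=c'$.

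\textbf{Main obstacle.} The only delicate point is this last bookkeeping step. The naive iteration, with parameter $\delta$ itself, runs for about $3/\delta$ rounds and loses the factor $\delta$ entirely. Running the iteration at the halved exponent $\delta/2$ and using that $r$ may be taken large depending on $\delta$ recovers it. One must also check that every application of Lemma \ref{lem:cont1} is legitimate, and it is: we apply it only to sets with $|C|>8r^7$, and $r$ is large.
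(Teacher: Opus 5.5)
Your proof is correct and follows the paper's route: build a tree of containers by re-applying Lemma~\ref{lem:cont1} to every container of size exceeding $8r^7$. The process stops after $O(1/\delta)$ rounds, because the per-round edge decay, starting from the bound $|E(\cH)|\le c_1 r^{16}\log r$ of Lemma~\ref{lem:counting}, eventually contradicts the supersaturation of Lemma~\ref{lem:sslL}, and the container counts multiply across rounds. The one difference is that you run the iteration at $\delta/2$, and this actually helps: the paper iterates at $\delta$ for $\lceil 4/\delta\rceil$ rounds and so only gets $\log|\cC|\le c\,r^{4+\delta}\log^2 r$, losing the factor $\delta$ in the third bullet (that weaker bound is all Lemma~\ref{lem:whp} uses). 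Your halved exponent, together with $r^{\delta/2}\ge 5/\delta$, delivers the stated bound $c'\delta r^{4+\delta}\log^2 r$.
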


\begin{proof}


    Iteratively apply Lemma \ref{lem:cont1}. We begin by applying the lemma with $\cJ=\cL$ to obtain a set of containers $\cC_{1}$ satisfying 
    \[
    \log | \cC_1| \leq c' \delta r^{4+\delta} \log^2 r
    \]
    such that any independent set $A$ in $\cH$ satisfies $A \subseteq C$ for some $C \in \cC_{1}$. For each $C \in \cC_{1}$, there are two possibilities.
    \begin{itemize}
    \item If $|C| > 8r^7$ we apply Lemma \ref{lem:cont1} with $\cJ=C$. We obtain a family $\cC_{C}$ of subsets of $C$ such that every independent set in $\cH[C]$ is contained inside a set in $\cC_{C}$.
        \item If $|C| \leq 8r^7$ then we put $C$ into our final set of containers $\cC$. For the purpose of describing this algorithm, it is convenient to define $\cC_{C}= \{ C \}$.
    \end{itemize}
We define 
\[
\cC_2:= \bigcup_{C \in \cC_1} \cC_C
\]
Note that $\mathcal C_2$ is a container set for $\cH$. Indeed, suppose that $X$ is an independent set in $\cH$. Then there is some $C \in \mathcal C_1$ such that $X \subseteq C$. Also, $X$ is an independent set in the hypergraph $\cH[C]$, which implies that $X \subseteq C'$ for some $C' \in \mathcal C_C \subseteq \mathcal C_2$.
    
    We then repeat this process for each set in $\cC_2$ to obtain a larger family of containers $\cC_3$ with smaller component sets. We can visualise this as a tree of containers, where the final set of containers $\cC$ consists of all of the leaf nodes. This process terminates when all of the leaf nodes correspond to sets $C$ with $|C| \leq 8r^7$. 
    
    We claim that this process terminates after at most $\lceil \frac{4}{\delta} \rceil$ steps. Indeed, let $C$ be a node in this tree at depth at least $\lceil \frac{4}{\delta}\rceil $. Since each application of Lemma \ref{lem:cont1} reduces the number of edges by a factor of $r^{-\delta}$, it follows that the number of edges determined by $C$ satisfies
    \begin{equation} \label{edges}
    |E(\cH[C])| \leq c_1r^{16} \log r(r^{-\delta})^{\lceil \frac{4}{\delta}\rceil }\leq c_1r^{12} \log r.
    \end{equation}
     Now suppose for a contradiction that $|C| > 8r^7$. Lemma \ref{lem:sslL} then implies that
    \[
   | E(\cH[C])| \geq \frac{|C|^3}{1000r^8}
    \]
    and combining this with \eqref{edges} and rearranging yields
    \[
    |C| \leq c_3 r^{20/3} \log^{1/3} r
    \]
    for an absolute constant $c_3$. But $|C| > 8r^7$ and this is a contradiction for $r$ sufficiently large.

    At the end of this process, we have a set of containers $\cC$ such that the bound $|C| \leq 8r^7$ holds for all $C \in \cC$. Finally, it follows from the upper bound for the size of the container families given in Lemma \ref{lem:cont1} that
    \[
    |\cC| \leq \left(2^{c'\delta r^{4+\delta} \log^2 r} \right)^{\lceil \frac{4}{\delta}\rceil } \leq  2^{cr^{4+\delta} \log^2 r},
    \]
as required.
\end{proof}

\section{A random subset of $\cL$}

In this section, we turn to the task of proving Theorem \ref{thm:BSlines}. First, we use Lemma \ref{lem:cont2} to show that, for a suitable choice of $p$, a $p$-random subset of $\mathcal L$ does not contain a large $3$-independent subset with high probability. Given a family of sets $\cA$, we denote
\[
\alpha_3(\cA):= \max \{ |\cA'| : \cA' \subseteq \cA \text{ and } \cA' \text{ is $3$-independent} \}.
\]

\begin{lemma} \label{lem:whp}
 Fix $0<\delta<1$ and let $\cL$ be the set of lines defined in \eqref{Ldefn}. Let $0 \leq p \leq  r^{-3}$ and let $\cK$ be a $p$-random subset of $\cL$, defined by setting 
 \[
 \mathbb P\left [ \ell \in \mathcal K\right ]=p, \,\,\, \forall \, \ell \in \cL.
 \]
 Then
 \[
 \lim_{r \rightarrow \infty} \mathbb P \left [ \alpha_3(\cK) \geq r^{4+2\delta}\right ] =0
 \]

 \begin{proof}
    Let $m=r^{4+2\delta}$ and suppose for simplicity that $m$ is an integer. We need to estimate the probability that our $p$-random set $\cK$ contains a $3$-independent subset of size $m$, and these sets correspond precisely to independent sets of size $m$ in the hypergraph $\cH$. Lemma \ref{lem:cont2} tells us that the only candidates for independent sets with size $m$ in this graph are subsets of $C \in  \cC$ with cardinality $m$, with good quantitative information about the sizes of the container family $\cC$ and its components. Each such independent set belongs to $\cK$ with probability $p^m$, and it therefore follows from the union bound that
    \begin{align*}
        \mathbb P[ \alpha_3(\cK) \geq m] \leq \sum_{C \in \cC} \binom{|C|}{m} p^m
         \leq 2^{cr^{4+\delta} \log^2 r} \binom{8r^7}{m} p^m
        & \leq 2^{cr^{4+\delta} \log^2 r} \left (\frac{8er^7p}{m} \right)^m
        \\& \leq 2^{cr^{4+\delta} \log^2 r} \left (\frac{8e}{r^{2\delta}} \right)^{r^{4+2\delta}}
        \\& \leq \left (\frac{16e}{r^{2\delta}} \right)^{r^{4+2\delta}}.
    \end{align*}
    The third inequality above uses the fact that $\binom{a}{b} \leq \left ( \frac{ea}{b} \right)^b$, the fourth uses the assumed upper bound on $p$ from the statement, and the fifth is valid as long as $r$ is sufficiently large (with respect to the parameter $\delta$). This completes the proof.
 \end{proof}
 
\end{lemma}


We are now ready to prove Theorem \ref{thm:BSlines}.

\begin{proof}[Proof of Theorem \ref{thm:BSlines}]

It suffices to consider $\delta < \frac{1}{5}$, since the theorem is trivial outside this range. Let $p=\frac{1}{2c_2^{1/3}r^3}$, where $c_2$ is the absolute constant from the statement of Lemma \ref{lem:counting}. Construct a $p$-random subset $\cK \subseteq \cL$, each $\ell \in \cL$ belonging to $\cK$ with probability $p$. We will show that, with positive probability, all of the following statements are simultaneously true:
 \begin{enumerate}
 \item $|\cK| \geq \frac{1}{2} \mathbb E[|\cK|]=\frac{r^5}{4c_2^{1/3}}$,
     \item $|Q(\cK)| \leq\frac{|\cK|}{2}$, \label{point2}
     \item $\alpha_3(\cK) \leq r^{4+2\delta}$.
 \end{enumerate}
 
 Suppose that, with positive probability, all of these statements hold, and in particular there is some set $\cK$ with all of these properties. Fix this set $\cK$. We prune $\cK$ to find a subset $L \subseteq \cK$ such that $|L| \gg r^5$ and $L$ is $4$-independent. That is, for every intersecting quadruple given by $\cK$, remove one line to destroy this quadruple. Because of point \eqref{point2} above, at the end of this process, we have deleted at most half of the lines and have no intersecting quadruples. Also, since $\cK$ does not contain any $3$-independent subset with size $r^{4 +2\delta}$, $L$ also does not, so $\alpha_3(L) \leq r^{4+2\delta}$. Therefore, the set $L$ has all of the properties claimed in the statement of Theorem \ref{thm:BSlines}, and the proof is complete.

  It remains to prove that the three statements above hold simultaneously with positive probability. Let $\mathcal E_1$ be the event that $|\cK| \leq \frac{1}{2}\mathbb E[|\cK|]$. Note that $Var[|\cK|] \leq \mathbb E[|\cK|]$, and so by Chebyshev's Inequality
 \[
 \mathbb P[\mathcal E_1] \leq \mathbb P\left [ | |\cK| - \mathbb E[|\cK|] | \geq\frac{1}{2} \mathbb  E[|\cK|] \right] \leq \frac{4}{\mathbb E[|\cK|]} \leq \frac{1}{10},
 \]
 where the last inequality holds for $r$ sufficiently large.

 Let $\mathcal E_2$ be the event that $|Q(\cK)| \geq \frac{1}{4} \mathbb E[|\cK|]$. By Markov's inequality
 \begin{equation} \label{mark1}
 \mathbb P [\mathcal E_2] \leq 4\frac{ \mathbb E [|Q(\cK)|]}{\mathbb E[|\cK|]}\leq 4\frac{ c_2p^4r^{17}}{pr^8}=\frac{1}{2}.
 \end{equation}

 Let $\mathcal E_3$ be the event that $\alpha_3(\cK) \geq r^{4+2\delta}$. An application of Lemma \ref{lem:whp} implies that this probability tends to zero as $r$ goes to infinity. In particular, by taking $r$ sufficiently large, we certainly have $\mathbb P[ \mathcal E_3] \leq \frac{1}{10}$.

 It therefore follows from the union bound that 
 \[
 \mathbb P[ \cE_1 \cup \cE_2 \cup \cE_3] \leq \frac{7}{10}.
 \]
 In particular, with positive probability, none of the three events occur. Finally, we note that $\neg ( \mathcal E_1 \cup \mathcal E_2 \cup \mathcal E_3 )$ implies that each of the three required properties at the beginning of the proof hold. 
    
\end{proof}

\section{A lower bound for Hadwiger-Debrunner numbers} \label{sec:HD}

For integers $p \geq q \geq 1$, a family of sets $\cF$ is said to satisfy the $(p,q)$-property if for any subset $\cF' \subset \cF$ with $|\cF'| \geq p$, there exist $q$ elements of $\cF'$ with a common intersection. In the context of this paper, the family of lines given by Theorem \ref{thm:BSlines} has the $(n^{\frac{4}{5}+\delta},3)$-property. The family $\cF$ is said to be \textit{pierced by $P$} if for all $F \in \cF$ there exists $x \in P$ such that $x \in F$. Solving a conjecture of Hadwiger and Debrunner \cite{HaDe}, it was proven by Alon and Kleitman \cite{AlKl} that for any $p \geq q \geq d+1$, there exists a constant $c_d(p,q)$ such that any family of convex sets in $\mathbb R^d$ with the $(p,q)$-property can be pierced by a set $P$ with size at most $c_d(p,q)$.

The problem of determining sharp bounds for these constants remains open. Keller and Smorodinsky \cite{KeSm} observed a connection with the Balogh-Solymosi Theorem and used the existence of the set of lines given by the dual formulation of Theorem \ref{thm:BS} to show that this constant must satisfy $c_2(p,3) =\Omega(p^{6/5-\epsilon})$ for all $\epsilon >0$. The improvement given in this paper by Theorem \ref{thm:BSlines} sharpens this bound.

\begin{corollary} \label{cor:HD} For all $\epsilon >0$ and all $p$ sufficiently large, there exists a family $\cF$ of convex sets in $\mathbb R^2$ with the $(p,3)$-property such that any set $P \subset \mathbb R^2$ piercing $\cF$ must have cardinality $\Omega(p^{\frac{5}{4}-\epsilon})$.
    
\end{corollary}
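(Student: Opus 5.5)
We take the family given by Theorem \ref{thm:BSlines} and thicken it slightly into convex sets. Fix $\epsilon>0$ and choose $\delta>0$ small in terms of $\epsilon$. For infinitely many $n$, Theorem \ref{thm:BSlines} provides a $4$-independent set $L$ of $n$ lines in which every subfamily of size at least $n^{\frac45+\delta}$ contains an intersecting triple. Set $p=\lceil n^{\frac45+\delta}\rceil$. Then $L$, viewed as a family of convex sets in $\mathbb R^2$, has the $(p,3)$-property.

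We next bound the piercing number from below. Any point of $\mathbb R^2$ lies on at most $3$ lines of $L$, because $L$ is $4$-independent. So if a set $P$ pierces $L$, we have $|P|\geq n/3$. Inverting $p\approx n^{\frac45+\delta}$ gives $n\geq p^{1/(\frac45+\delta)}$, and hence
\[
|P| \gg p^{\frac{5}{4+5\delta}} \geq p^{\frac54-\epsilon}
\]
once $\delta$ is chosen small enough relative to $\epsilon$.

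Lines are unbounded. If the definition of $HD_2(p,3)$ requires compact convex sets, we replace each line by a long segment. Take a disc $D$ containing all pairwise intersection points of $L$, and use the segments $\ell\cap D'$ for a slightly larger disc $D'$. Any intersection of segments is also an intersection of the corresponding lines, so no point lies in $4$ segments and the piercing bound still holds. Every pairwise intersection of lines lies in $D$, so every intersecting triple of lines remains an intersecting triple of segments, and the $(p,3)$-property is preserved.

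The remaining point is that $p$ takes only the values $\lceil n^{\frac45+\delta}\rceil$ for the infinitely many admissible $n$, whereas the statement asks for all large $p$. We handle a general large $p$ by monotonicity. The $(p,3)$-property becomes weaker as $p$ increases, so a family with the $(p_0,3)$-property also has the $(p,3)$-property for every $p\ge p_0$. For a given $p$, we take the largest admissible $n$ with $n^{\frac45+\delta}\le p$. This requires the admissible $n$ to be reasonably dense. In the construction, $n\asymp r^5$ for all large integers $r$, so consecutive admissible values differ by a bounded factor. That bounded factor is absorbed into the constant, and the bound $\Omega(p^{\frac54-\epsilon})$ follows. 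Checking this density point is the main obstacle; everything else is direct.
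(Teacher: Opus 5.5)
Your proposal is correct and is essentially the paper's argument. The paper also draws on the construction in the proof of Theorem~\ref{thm:BSlines} rather than its bare statement: for each large $p$ it takes the largest $r$ with $r^{4+2\delta}<p$, and uses $\alpha_3(\cL_r)\le r^{4+2\delta}$, $|\cL_r|\gg r^5$ and the fact that each point lies on at most $3$ lines to get a piercing bound of $|\cL_r|/3\gg p^{5/(4+2\delta)}$. This is exactly your ``density of admissible $n$'' step, stated directly in terms of $r$, so it needs only the lower bound $|\cL_r|\gg r^5$. Your truncation to segments matches the paper's closing remark on compact sets.
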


\begin{proof}
Fix $\epsilon>0$ and then choose $0<\delta<1/2$ to be sufficiently small with respect to $\epsilon$ (the simple choice $\delta=\epsilon$ is sufficient). The proof of Theorem \ref{thm:BSlines} gives, for all $r$ sufficiently large, a set $\cL_r$ of lines such that
\[
|\cL_r| \geq cr^5,\,\,\,\, \alpha_3(\cL_r) \leq r^{4+2\delta}.
\]
Let $p$ be a sufficiently large integer and set $r$ to be the largest integer such that $r^{4+2\delta} < p$. Then $\cL_r$ has the $(p,3)$-property. Note that, since $p \leq (r+1)^{4+2\delta} \leq (2r)^{4+2\delta} $, we have
\[
r > \frac{p^{\frac{1}{4+2\delta}}}{2}.
\]
On the other hand, since any point in $\mathbb R^2$ belongs to at most $3$ of the sets in $\cL_r$, it follows that any piercing set for the family $\cL_r$ must have cardinality at least
\[
\frac{|\cL_r|}{3} \gg r^5 \gg p^{\frac{5}{4+2\delta}} \geq p^{\frac{5}{4}- \epsilon}.
\]
\end{proof}

Using the common notation in this area, Corollary \ref{cor:HD} states that the bound $HD_2(p,3)= \Omega(p^{\frac{5}{4}-\epsilon})$ holds for all $p$ sufficiently large. Here, $HD_d(p,q)$ denotes the smallest value $c$ such that every family of convex sets in $\mathbb R^d$ with the $(p,q)$-property can be pierced by a set with size at most $c$, i.e. the optimal value of $c_d(p,q)$ which was shown to exist in \cite{AlKl}. The best known upper bound $HD_2(p,3)=O(p^{3+\epsilon})$ follows by combining the work of Keller, Smorodinsky and Tardos \cite{KST} with a bound of Rubin \cite{Ru} on the size of weak $\epsilon$-nets in the plane. 

In some of the literature concerning Hadwiger-Debrunner numbers, the definition of $HD_d(p,q)$ includes the additional restriction that the convex sets involved are also compact. In our case, we can transform the lines in $L$ into compact sets by simple truncating them at a sufficiently distant point so that none of the intersection properties are disturbed.


\section{Optimality of the Balogh-Solymosi supersaturation lemma} \label{sec:optimal}

In this section, we will prove that Theorem \ref{thm:ssl} is optimal up to logarithmic factors.


 \begin{proof}[Proof of Theorem \ref{thm:main}]

Consider first the set
\[
A:= \{ (x,y,x^2+y^2) :x,y \in \mathbb F_p \} \subset \mathbb F_p^3.
\]
This is a quadratic surface in $\mathbb F_p^3$, and any line containing at least three points of $A$ must be contained in $A$. It follows from the condition that $p$ is congruent to $3$ modulo $4$ that there are no lines contained in $A$. Indeed, suppose that a line
\[
\{(a_1,a_2,a_3)+t(v_1,v_2,v_3) : t \in \mathbb F_p \}
\]
is contained in $A$. We must have at least one of the $v_i$ not equal to zero, otherwise this is not a line. Considering $t \in \{-1,0,1\}$, it follows that
\begin{align*}
    a_3 & = a_1^2+a_2^2,
    \\ a_3 +v_3 &=(a_1+v_1)^2+(a_2+v_2)^2,
    \\ a_3-v_3 &= (a_1-v_1)^2+(a_2-v_2)^2.
\end{align*}
Expanding and simplifying yields
\begin{equation} \label{mod1}
v_1^2+v_2^2=v_3-2(a_1v_1+a_2v_2) = 2(a_1v_1+a_2v_2)-v_3,
\end{equation}
and hence we must have $v_1^2+v_2^2=0$. However, we cannot have $v_1=v_2=0$, as then \eqref{mod1} implies that $v_3=0$, which is not permitted. We may then assume without loss of generality that $v_2 \neq 0$, and the equation $v_1^2+v_2^2=0$ can then be rearranged into the form
\[
(v_1v_2^{-1})^2=-1,
\]
which cannot be valid with $p$ congruent to $3$ modulo $4$. A proof of this fact is also implicit in \cite[Lemma 5]{PoWo}. It thus follows that no line in $\mathbb F_p^3$ contains more than $2$ points from $A$. 

Now, let $\mathcal A$ denote the set $A$ viewed as a subset of $ \{0,1,\dots,p-1 \}^3 \subset \mathbb Z^3$. Define the grid set $G$ to be
\[
G:=  p\{0,1, \dots, r-1\}^3.
\]
The set $S$ from the statement of Theorem \ref{thm:main} is defined as
\[
S:= \mathcal A + G \subset \{0,\dots,pr-1 \}^3.
\]
After shifting by the vector $(1,1,1)$, this set lies in $[pr]^3$, and the number of collinear triples is unchanged. We can view $S$ as 
\[
S= \bigcup_{a \in \mathcal A} a + G,
\]
and note that this is a disjoint union, thus $|S|=p^2r^3$.

Let $\ell$ be a line $\mathbb R^3$ which contains at least three points from $S$. In particular, the line contains at least two points of $\mathbb Z^3$, and so we can express the line $\ell$ in the parameterised form
\[
\ell:= \{ x+tv : t \in \mathbb R \},
\]
with $v,x \in \mathbb Z^3$ and such that $v$ is the primitive direction of the line, by which we mean that $v=(v_1,v_2,v_3)$ is a nonzero vector and it satisfies the condition that the nonzero elements of $\{v_1,v_2,v_3\}$ have no common factors. Moreover, since we are only interested in integer points of this line, we may restrict our focus to the set $\{x+tv:t \in \mathbb Z \}$.

We first claim that $\ell$ intersects at most two of the sets $a+G$ with $a \in \cA$. Indeed, if $\ell$ intersects three distinct $a_i+G$ with $i=1,2,3$, then the line $\ell$ reduced to $\mathbb F_p^3$ (i.e. the set we get by taking the elements of $\ell \cap \mathbb Z^3$ and reducing each coordinate modulo $p$) contains each of $a_1, a_2,a_3 \in A$, which contradicts the fact that any line in $\mathbb F_p^3$ contains at most two elements of $A$.

It therefore follows that $\ell$ must contain at least two points from the same set $a+G$. Let $a+u_1$ and $a+u_2$ be two points of $S$ with $u_1,u_2 \in G$ and $u_1 \neq u_2$. Their difference is
\[
(a+u_1)-(a+u_2)=u_1-u_2=p(\alpha_1-\alpha_2),
\]
for some $\alpha_1,\alpha_2 \in \{0,1,\dots,r-1 \}^3$. It therefore follows that the primitive direction vector $v$ for the line $\ell$ satisfies $\|v\|_{\infty} \leq r$. We can thus henceforth restrict our focus to lines with small primitive direction.

Now, fix a direction vector $v$ with $\|v \|_{\infty}=q \leq r$. We will upper bound the number of collinear triples for lines with this direction, and then sum over all possible directions. We first need to count the number of lines with this direction which meet the grid $\{0,1,\dots, pr-1\}^3$. Such lines are determined by a ``first point" $x \in \ell \cap \{0,1,\dots, pr-1\}^3$, i.e. a point $x$ on the line and in the grid such that $x-v \notin \{0,1,\dots, pr-1\}^3$. Such a point $x$ must lie close to one of the faces of the cube containing $\{0,1,\dots, pr-1\}^3$, and it follows that there are $O(q(pr)^2)$ choices for $x$, and thus $O(q(pr)^2)$ lines with direction $v$ to consider.

Now, fix one such line $\ell$ with primitive direction $v$ intersecting $\{0,1,\dots, pr-1\}^3$. Since we are only interested in points of this line belonging to the grid $\{0,1,\dots, pr-1\}^3$, we may restrict our attention to the subset
\begin{equation} \label{reparamaterise}
    \{x+tv: t \in \mathbb Z \cap [0,pr/q]  \}
\end{equation}
of the line. How many elements from the set in \eqref{reparamaterise} belong to $S$? Recall that $\ell$ intersects at most $2$ sets of the form $a+G$ with $a \in \mathcal A$, and let us fix these sets as $a_1+G$ and $a_2+G$. Consider the set \eqref{reparamaterise} reduced modulo $p$. This is a line in $\mathbb F_p^3$, with points possibly occurring with multiplicity as we cycle repeatedly over the line. We can only hit a point of $S$ when the corresponding line in $\mathbb F_p^3$ hits either $a_1$ or $a_2$, which happens a proportion of $2/p$ of the time. It therefore follows that the line $\ell$ hits at most $O\left ( \frac{1}{p} \cdot \frac{pr}{q} \right )=O(r/q)$ of the points of $S$. The direction $v$ therefore contributes at most
\[
O(q(pr)^2 \cdot (r/q)^3) = O(p^2r^5/q^2)
\]
collinear triples to the count. Finally, we dyadically decompose over all possible sizes of $q$ and sum this quantity to conclude that
\begin{align*}
    t(S) \ll \sum_{j \in \mathbb N : 2^{j-1} \leq r} \sum_{v \in \mathbb Z^3 : 2^{j-1} \leq \|v\|_{\infty}\leq 2^{j}}  \frac{p^2r^5}{(2^j)^2} \ll \sum_{j \in \mathbb N : 2^{j-1} \leq r}(2^j)p^2r^5 \ll p^2r^6.
\end{align*}

\end{proof}

We now verify that this construction implies the claim of Corollary \ref{cor:main}.

\begin{proof}[Proof of Corollary \ref{cor:main}]
    We first check the boundary cases $s=1$ and $s=0$. For $s=1$, we simply take $S$ to be the set $\mathcal A$ defined in the proof of Theorem \ref{thm:main}, and set $n=p$. The fact that this set contains no collinear triples was proven above, and the same construction appeared in work of P\'{o}r and Wood \cite{PoWo}. For $s=0$, we take $S=[n]^3$. It is a well-known fact that $[n]^3$ contains $\Theta(n^6)$ collinear triples; see for instance \cite[Claim 4.1]{BaSo}.

    Now let $0 < s < 1$, let $p$ be a sufficiently large prime congruent to $3$ modulo $4$ and set
    \[
    r:=  \left \lceil p^{\frac{1-s}{s}} \right \rceil.
    \]
    Apply Theorem \ref{thm:main} with this choice of $p$ and $r$, and so $n=pr$. Note that
    \[
    p^{1/s} \leq n \leq 2 p^{1/s},
    \]
    where the upper bound follows from the assumption that $p$ is sufficiently large (with respect to $s$). A rearrangement gives
    \[
    \frac{n^{s}}{2^s} \leq p \leq n^s.
    \]
    It follows from Theorem \ref{thm:main} that
    \[
    |S|=p^2r^3=n^3/p \geq n^{3-s}
    \]
    and
    \[
    t(S) \ll p^2r^6= n^6/p^4 \leq 2^{4s}n^{6-4s} \ll n^{6-4s}.
    \]
\end{proof}

\section*{Acknowledgments and AI Disclaimer} Oliver Roche-Newton is partially supported by the Austrian Science Fund (FWF) project PAT2559123. Thank you to J\'{o}zsef Balogh, Chaya Keller, Zuzana Pat\'akov\'a, Cosmin Pohoata, Shakhar Smorodinsky, J\'{o}zsef Solymosi and Audie Warren for helpful discussions.

This work was carried out in collaboration with ChatGPT. The paper is human-written and the author takes full responsibility for the contents of the paper.

\bibliography{bibliography}
\bibliographystyle{plain}
\end{document}